\numberwithin{equation}{section}
\renewcommand{\thefigure}{\thefigure.\arabic{equation}}
\numberwithin{figure}{section}
\theoremstyle{plain}
\newtheorem{teo}{Theorem}[section]
\newtheorem{prop}[teo]{Proposition}
\newtheorem{coro}[teo]{Corollary}
\newtheorem{defin}[teo]{Definition}
\newtheorem{obs}[teo]{Remark}
\newcommand{\Rr}{\mathbb{R}}
\newcommand{\CG}{\mathcal{G}}
\newcommand{\CC}{\mathcal{C}}
\newcommand{\CE}{\mathcal{E}}
\newcommand{\CT}{\mathcal{T}}
\newcommand{\CH}{\mathcal{H}}
\newcommand{\CF}{\mathcal{F}}
\newcommand{\tq}{\, | \,}
\newcommand{\CL}{\mathcal{L}}
\begin{document}
\vspace{-5mm}

\title{The realization of admissible graphs for coupled vector fields}
\date{}
\maketitle
\vspace*{-1cm}
\centerline{\scshape Tiago de Albuquerque Amorim\footnote{Email address: tiagoamorim8@usp.br
 }}
{\footnotesize \centerline{Mathematics Department, ICMC}
\centerline{University of S\~ao Paulo} \centerline{13560-970 P.O. box
668, S\~ao Carlos, SP - Brazil }}
\vspace*{0,5cm}
\centerline{\scshape Miriam Manoel\footnote{Email address:
miriam@icmc.usp.br (corresponding author)}}
{\footnotesize \centerline{Mathematics Department, ICMC}
\centerline{University of S\~ao Paulo} \centerline{13560-970 P.O. box 668, S\~ao Carlos, SP - Brazil }}

\vspace*{1cm}

\begin{abstract}
In a coupled network cells can interact in several
ways. There is a vast literature from the last twenty years that investigates this interacting
dynamics under a graph theory formalism, namely as a graph endowed with an input-equivalence relation
on the set of vertices that enables a characterization of the admissible vector fields that rules the network dynamics. The present work goes in the direction of answering an inverse problem: for $n \geq 2$, any mapping on $\Rr^n$
can be realized as an admissible vector field for some graph with the number of vertices depending on (but not necessarily equal to) $n$. Given a mapping, we present a procedure to construct all non-equivalent admissible graphs, up to the appropriate equivalence relation. We also give an upper bound for the number of such graphs. As a consequence, invariant subspaces under the vector field can be investigated as the locus of synchrony states supported by an admissible graph, in the sense that a suitable graph can be chosen to realize couplings with more (or less) synchrony than another graph admissible to the same vector field. The
approach provides in particular a systematic investigation of occurrence of chimera states in a network of van der Pol identical oscillators.
\\ {$\,$} \\
Keywords: network, graph, admissible vector field, synchrony, symmetry. 

\noindent Mathematics Subject Classification numbers: 34C15, 37G40, 82B20, 34D06.
\end{abstract}

\section{Introduction} \label{sec:introduction}
Studies involving several dynamical systems and their interaction have been carried out for a long time. We go back to the 80's to recall physical, biological, mechanical systems that have been interpreted in many ways as coupled cell systems;
for example, Josephson junction arrays \cite{[1988]}, semiconductor coupled lasers or multimode solid state laser systems in \cite{WW1988} and \cite{Braci1990},  central pattern generators and symmetric chains of weakly coupled oscillators in  \cite{Kopell1986}, \cite{Kopell1988} and \cite{Kopell1990}, sympatric speciation \cite{StewartCohen2000}, normal mode vibrations of a loaded string and linear motion of a triatomic molecule \cite{Fowles1986}, the classical $n$-body dynamics  \cite{Griffiths1985}, among many others. 
 It was  around 2002 that the authors M. Golubitsky, I. Stewart and collaborators (\cite{GNS}, \cite{SM2002}, \cite{SMT}, \cite{SMP}) started to formulate the notion of a \textit{coupled cell network}, a rich systematic way to study coupled dynamics under a graph theory formalism \cite{SM2002}, establishing a general settings for simple graphs (no multiarrows or loops)  \cite{SMP} and for multigraphs (possible multiple arrows and loops) \cite{SMT}. Under that formulation, a network graph is more than a finite set of vertices with a finite set of arrows, for there are distinct types of vertices and arrows to be taken into consideration to represent abstractly a system of ODEs equipped with interacting individual cells as canonical observables. Each vertex represents an individual cell which is governed by an autonomous system of ODEs, and the set of edges encodes couplings, so that a well defined `admissible' vector field  is assigned to this network graph. 

A central matter is to know to what extent the rigidity of the network graph topology constrains the investigation of the associated dynamics. It is well known that topologically distinct network graphs can lead to the same set of admissible vector fields (see \cite{SMT}). Hence, as naturally expected, this formalism is largely based on different kinds of equivalence relations, as we briefly mention in what follows. One key identification of the cells inside a network is the notion of `input isomorphism' \cite{SMP}, under which two cells are equivalent  if the dynamics of the cells are governed by the same differential equations, up to a permutation of the variables. Another identification is defined by the `balanced equivalence relation' between cells, which stratifies the set of cells in terms of equalities among cell variables representing a synchrony state of the network. This is a major topic in the investigation of coupled dynamics, with a countless number of works devoted to it.  The authors in \cite{AguiarDiasAlg}  present an algorithm that generates the lattice of synchrony subspaces of a given network graph. Balanced equivalence relations are also a strong reason for the setting of the multigraph formalism, that is, for graphs with multiple arrows of same type connecting two vertices, since the associated quotient graphs -- those that encode the synchronies in the network -- are generally multigraphs. Alongside these relations, much attention has also been driven to relations between networks, namely the `automorphisms' of a network graph (formalized in \cite{AntoneliStewart}) and the `ODE-equivalence' between networks (see  \cite{DiasStewart} and \cite{SMP} for example). Two nonisomorphic networks can have equivalent dynamical behavior, and this is detected by ODE-equivalence. 

If we now look at the usual way of modelling a coupled dynamics through an associated vector field, relevant distinctions emerge in assigning a simple graph (one edge of a type connecting two vertices), or a multigraph, that realizes it as its admissible vector field. It is expected that this assignment is not unique -- just take two distinct network graphs inside an ODE-equivalence class, possibly one chosen to be a simple graph and the other to be a multigraph. 
However, a more subtle case is equally possible, as it is shown with the elementary example below, also illustrating that, although it may be simpler to deal with simple graphs, multiple arrows are sometimes the appropriate way to model certain couplings.  Consider the following differential equations on $\Rr^3$, 
\begin{equation} \label{eq:3-cell}
\begin{array}{lll}
\dot{x}_1 & = & x_1+x_1^3  \\
\dot{x}_2 & = & x_2+x_2^2x_3  \\
\dot{x}_3 & = & x_3+x_1x_2x_3. \\
\end{array}
\end{equation}
These equations can model a 3-cell network represented by the simple directed graph of Fig.~\ref{graph1} (left), which is an inhomogeneous graph with cells with distinct valencies. However, the same equations can also model a network represented by the multigraph of Fig.\ref{graph1} (right), which is a regular graph, with all cells of same type and same input and all arrows of same type, which therefore presents synchronies. In Subsection~\ref{subseq: simple graph versus multigraph} we return to this example. Now, if, on one hand, from a given graph and a predetermined set of cell domain  there is a unique general form of an admissible vector field, the inverse problem, on the other hand, is not uniquely solved, as the example above shows. For this elementary example, the possible synchronous configurations for both graphs coincide, namely total synchrony and total asynchrony, but this is not the case in general, as we shall see in many examples here. 

\begin{figure}[h!]
	\centering 
	\includegraphics[width=5cm]{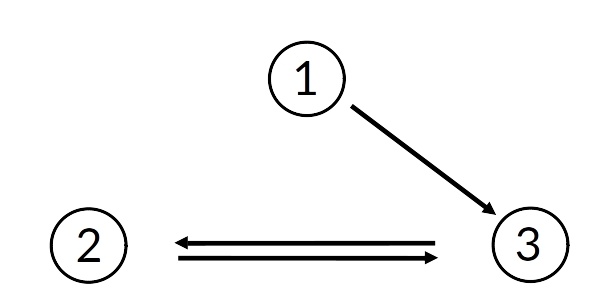} \hspace*{2cm}  \includegraphics[width=5cm]{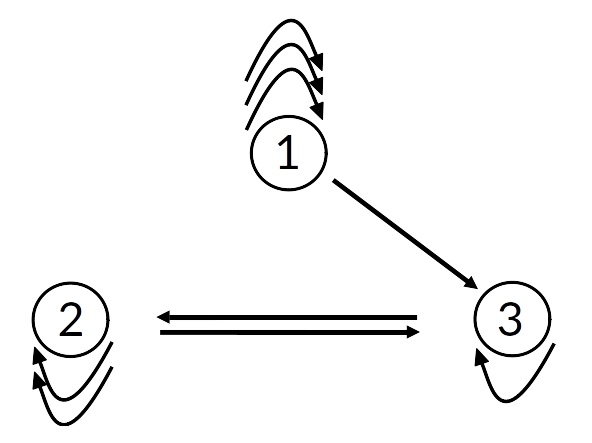} 
	\caption{\small An admissible nonregular simple graph (on the left) and an admissible regular  multigraph (on the right) for the system of equations (\ref{eq:3-cell}).}
	\label{graph1}
\end{figure}

In distinct areas of applied science, the possible distinct couplings that can be arranged from a given vector field modeling an interacting dynamical system  is a relevant problem to be understood. The aim here is to discuss and solve this inverse problem in general. For a given vector field, we construct non ODE-equivalent graphs that realize it as their admissible vector field, which we shall call `admissible graphs'. 

In order for a vector field to be a candidate to model a coupled dynamics with nontrivial coupling, the starting point is to choose the number of cells as well as the dimensions of the cell variables. Next, we look at symmetries, namely invariance of its component functions under permutations of (sets of) variables; for example, if the components of the vector field are invariant under all the permutations of its variables, then there is only one resulting admissible graph, which is a homogeneous graph. Symmetries are then related to the assignment of {\it generating functions} to the components of the vector field and, as we shall see, this is closely related to the number of distinct admissible graphs. We remark that  there are special cases for which two distinct such choices may lead to network graphs whose synchronies can be related, in the sense that one is deduced from the other (see Section \ref{sec:van der Pol}).

As already mentioned above, isomorphic graphs and more generally ODE-equivalent graphs must be identified in the search of distinct types of dynamics. Our procedure takes these identifications into consideration, producing the complete list of non-isomorphic non ODE-equivalent graphs that can be realized from a given vector field. We also give an upper bound for the number of elements in this list (Proposition~\ref{propvartheta}). 

One particular motivation to carry out with this procedure relies on the approach of a many works to study  `chimera' states of  a vector field (Section~\ref{sec:van der Pol}). We cite \cite{AS}, \cite{KB}, \cite{PA}, \cite{MTFH}, \cite{Cerdeira}  for example, and \cite{Anna} for a timely overview on the subject. A chimera state is a peculiar partial synchronization patterns in networks,  defined as a spatio-temporal pattern in which a system of
identical oscillators is split into coexisting regions of coherent (phase and frequency locked) and incoherent (drifting) oscillation \cite{AS}. Hence, it is a state in which the  whole set of cells is broken into a synchronous part and
an asynchronous part. This is a phenomenon in interacting populations of intense research efforts in physical, biological, chemical and social systems \cite{ABCR}. We notice that although a well-known phenomenon in non-identical coupled oscillators, the coexistence of coherence and incoherence was observed by Kuramoto and Battogtokh for a system of identical oscillators \cite{KB}; we refer to \cite{PA} for a comprehensive list of references about this subject. In these works, the research is carried out from a model given by a vector field, based on numerical or analytical investigation of the set of the proposed equations. For example, the authors in \cite{MTFH} carry out numerical simulations of a mechanical experiment with two subpopulations of identical metronomes on two coupled swings with chimera states emerging robustly. In this experiment, we observe that the modeling equations admit an invariance under the permutation of the two subsets of metronomes that has not been taken into account apparently, and this might reveal distinct chimera states. Our procedure is in particular a way to describe chimeras from the possible synchronized states that appear from the graphs admissible from the given vector field. In this setting, the analysis of robust attracting chimeras should correspond to the numerically observed chimeras, and this is an ongoing investigation so far. Section~\ref{sec:van der Pol} is addressed to expand on this topic in more detail with a concrete example. \\

The structure of this paper is as follows. In Section~\ref{sec:preliminaries} we mostly review known results and standard notation from the literature of coupled networks.  
Section~\ref{sec:realization} is the central section, addressed to present the construction procedure  of admissible graphs of a given $C^1$ vector field (Subsection~\ref{subseq:method}), an illustrative example (Subsection~\ref{subsec:vf on R4}) and a brief discussion about a possible resulting simple graph or multigraph for a given vector field 
(Subsection~\ref{subseq: simple graph versus multigraph}).  The relation among admissible graphs of a smooth vector field obtained from our procedure up to isomorphism and ODE-equivalence is given in a set of results in Section~\ref{sec:realization}, which are our main results.  Section~\ref{sec:van der Pol} is an application of the results, where we discover chimera states in a network of six van der Pol oscillators  as particular synchronous configurations of some of the admissible graphs.

\section{Coupled cell networks} \label{sec:preliminaries}

We leave to this section some basic results, as well as the definitions, notation and results from the literature, that we use in the forthcoming sections.

Cells are individual dynamical systems represented by vertices of a graph whose interactions are represented by the edges. A {\it coupled cell network} $\CG$, with possible multiple couplings and self-couplings, consists of a finite set of cells $\CC=\{1,\ldots,n\}$ with an equivalence relation $\sim_{\CC}$; 
a finite set of arrows (or edges) $\CE$ with a equivalence relation $\sim_{\CE}$;
two maps $\CH,\CT \colon \CE \rightarrow \CC$, where, for $e \in \CE$,  $\CH(e)$ and  $\CT(e)$ are the head and the tail of $e$, with a compatibility condition,
\begin{eqnarray} e_1,e_2 \in \CE, e_1 \sim_{\CE} e_2 \quad \Rightarrow  \quad  \CH(e_1) \sim_{\CC} \CH(e_2),  \ \CT(e_1) \sim_{\CC} \CT(e_2). \nonumber 
\end{eqnarray}
We shall also denote $\CG=(\CC,\CE, \sim_{\CC},\sim_{\CE})$. From now on, we shall also refer to a coupled cell network as a network graph, or simply as a graph. A self-coupling is an edge $e$ such that  $\CH(e)=\CT(e)$ and multiarrows are distinct edges $e_1, e_2$ such that  $\CH(e_1) =\CH(e_2)$ and  $\CT(e_1) = \CT(e_2)$.  For a graph $\CG$  with $n$ cells, the order-$n$ adjacency matrix of  each type of edges $\xi \in \CE/{\sim_{\CE}}$ shall be denoted by $A_{\CG}^{\xi}$ 
where 
\begin{eqnarray} (A_{\CG}^{\xi})_{ij}= | ( \CT^{-1}(j) \cap \CH^{-1}(i) \cap \xi)| ; \nonumber  
\end{eqnarray}
when it is clear from the context, we shall omit $\CG$ from this notation. For $c \in \CC$, the {\it input set} of $c$ is 
$$I(c)=\{e \in \CE : \  \CH(e)=c\}=\CH^{-1}(c).$$ The input equivalence relation $c \sim_I d$ between two cells {$c, d \in \CC$} is given by the existence of an arrow-type preserving bijection 
\begin{eqnarray} 
\beta \colon I(c) \rightarrow  I(d), \nonumber 
\end{eqnarray} 
that is, $\beta(e)\sim_{\CE} e,$ for all $e \in I(c).$ The set of the input isomorphisms $\beta$  is denoted by $B(c, d)$. The union of all $B(c,d)$ has a groupoid structure with respect to the composition. If $c \sim_I d$ and $I(c)$ is nonempty, then from the compatibility condition we have $c \sim_{\CC} d$. Hence, if $I(c)$ is empty, then we require that $c \sim_{I} d$ implies  $c \sim_{\CC} d$.

A network graph $\CG$ is  {\it homogeneous} if all cells are $I$-equivalent, in which case $B(c,d)  \neq \emptyset,$  for all $c,d \in \CC$; and $\CG$ is {\it regular} if it is homogeneous with one type of edge.  

An isomorphism between two network graphs $\CG_1$, $\CG_2$ is given in the natural way:  if there exists a cell bijection $\gamma_{\CC} \colon \CC_1 \rightarrow \CC_2$ and an arrow bijection $\gamma_{\CE} \colon \CE_1 \rightarrow \CE_2$ such that 
	
$$		c \sim_{\CC_1} d  \Leftrightarrow  \gamma_{\CC}(c) \sim_{\CC_2} \gamma_{\CC}(d) \nonumber$$ 
$$ e \sim_{\CE_1} e' \Leftrightarrow \gamma_{\CE}(e) \sim_{\CE_2} \gamma_{\CE}(e')$$
$$\CT_2(\gamma_{\CE}(e))=\gamma_{\CC}(\CT_1(e)),   \CH_2(\gamma_{\CE}(e))=\gamma_{\CC}(\CH_1(e)), \  \forall e \in \CE_1.$$
Hence, $\CG_1$ and $\CG_2$ are isomorphic if, and only if, by a rearrangement of cells the adjacency matrices of $\CG_1$ and $\CG_2$ are the same. We denote by $Iso(\CG)$ the set of isomorphisms of $\CG$ into itself (see Corollary \ref{cor: Iso}).

\begin{obs}
{\rm In a simple network graph (no multiple arrows), each arrow $e \in \CE$ can be identified with the ordered pair $(\CT(e),\CH(e))$. In a network graph with no loops, for each arrow we have $\CT(e) \neq \CH(e)$. Hence, in the simple graph formalism,  $\CE$ can be viewed as a subset of $\CC \times \CC$ with no elements of the form $(c,c), \, \, c \in \CC$ and the input set of $c \in \CC$ can be viewed as the subset of $\CC$ given by  $ \{d \in \CC \tq (d,c) \in \CE\}.$ Let us just point out that in \cite{SMP} it is assumed that $\{(c,c) \tq c \in \CC\} \subset \CE$ together with the condition that $(c,c) \sim_{\CE} (d,d')$ holds only  if $d'=d$ and $c \sim_{\CC} d$, aiming to have in hand the useful condition that $c \in I(c)$. However, for formal purposes, this should be avoided, because an internal edge $(c,c)$ can not be related to an external edge $(d,d')$, $d \neq d'$.}
\end{obs}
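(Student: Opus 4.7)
The plan is to verify each observation in the remark by unpacking the definitions of simple graph, multiarrow, self-coupling, and input set given earlier in the section. First, I would note that the definition of a multiarrow — distinct edges $e_1, e_2 \in \CE$ with $\CT(e_1) = \CT(e_2)$ and $\CH(e_1) = \CH(e_2)$ — means that absence of multiarrows is exactly the injectivity of the map $\Phi \colon \CE \to \CC \times \CC$, $e \mapsto (\CT(e), \CH(e))$. Hence $\Phi$ is a bijection of $\CE$ onto its image, and one may identify $\CE$ with a subset of $\CC \times \CC$ without losing head/tail information.

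Next, I would invoke the definition of a self-coupling as an edge $e$ with $\CH(e) = \CT(e)$, so that absence of loops means $\Phi(\CE)$ is disjoint from the diagonal $\{(c,c) \tq c \in \CC\}$. Combined with the previous step this gives $\CE \subset (\CC \times \CC) \setminus \{(c,c) \tq c \in \CC\}$ in the simple, loop-free case. For the claim about input sets, I would restrict $\Phi$ to $I(c) = \CH^{-1}(c)$, whose image consists of pairs whose second coordinate is $c$; projecting onto the first coordinate then gives a bijection $I(c) \leftrightarrow \{d \in \CC \tq (d,c) \in \CE\}$, as asserted.

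The final critique of the convention in \cite{SMP} is the conceptually delicate part. The strategy is to argue that forcing the inclusion $\{(c,c) \tq c \in \CC\} \subset \CE$, while restricting $(c,c) \sim_{\CE} (d,d')$ to the case $d' = d$ with $c \sim_{\CC} d$, is in tension with the type-based structure of $\sim_{\CE}$. The key observation is that an internal edge $(c,c)$ has a single adjacent cell and hence a qualitatively different local incidence structure than an external edge $(d,d')$ with $d \neq d'$; since the arrow-type equivalence is intended to capture ``same role in the coupling structure'', no internal edge can legitimately be related to any external edge. I would make this precise by exhibiting the obstruction at the level of input sets: any class containing an internal edge contributes a self-input at every cell of the corresponding type, while a class of external edges does not, so the two cannot coexist under the compatibility condition without distorting the groupoid structure of $B(c,d)$ described earlier in the section.

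The main obstacle I expect is in the third paragraph, because the objection is semantic rather than strictly formal: the \cite{SMP} convention is internally consistent, and the real content of the argument is that it conflates self-couplings with genuine couplings, which is preserved naturally by the convention adopted here. The formal derivations in the first two paragraphs are essentially bookkeeping once the relevant definitions from Section~\ref{sec:preliminaries} are spelled out.
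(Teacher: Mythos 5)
Your proposal is correct and follows essentially the only available route: the statement is a definitional remark with no proof in the paper, and your verification by unpacking the definitions of multiarrow (injectivity of $e \mapsto (\CT(e),\CH(e))$), self-coupling (image disjoint from the diagonal), and input set is exactly the bookkeeping the paper implicitly relies on. You also correctly identify that the final objection to the convention of \cite{SMP} is semantic rather than formal, which matches the paper's own informal phrasing (``for formal purposes, this should be avoided'').
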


\subsection{Admissible vector fields} 

For each cell $c\in \CC$, let $P_c$ denote its domain, or the so-called \textit{cell phase space}, which is a nontrivial finite-dimensional real vector space. We require 
\begin{eqnarray}c \sim_{\CC} d \Rightarrow
P_c = P_d, \label{cap1eq2},
\end{eqnarray}
in which case we use the same coordinate system for $P_c$ and $P_d$. The \textit{total phase space} and the \textit{coupled phase space} of $c \in \CC$ are, respectively, 
$$P= \prod _{c \in \CC} P_c \ ,  \ \  P_{I(c)}=\prod _{e \in I(c)} P_{\CT (e)},$$ 
the coordinate system of the later being ${y}=(y_e)_{e \in I(c)}$. 
Consider now the map 
$$\begin{array}{cccc}
\pi_{I(c)} \colon & P & \longrightarrow & P_{I(c)} \\
& x = (x_{c'})_{c' \in \CC} & \longmapsto & \pi_{I(c)}(x)=(x_{\CT(e)})
\end{array},$$ 
which repeats each variable $x_{c'}$ as many as the number of edges from $c'$ to $c$ in $I(c)$ . In particular, from \eqref{cap1eq2} it follows that,  if $c \sim_I d$, then $P_{I(c)}=P_{I(d)}$. 

Finally, for any $c \in \CC,$ the vertex group $B(c, c)$  has a natural action  on  $P_{I(c)}$ as follows: for each $\beta \in B(c,c)$ we have  
$$\beta^{*} \colon P_{I(c)} \to    P_{I(c)}$$
defined by
$$\beta^{*}(y)_e= y_{\beta(e)}, \quad \forall e \in I(c).$$ 
Similarly, for $\beta \in B(c,d)$,  we also define $\beta^{*} \colon P_{I(d)} \to    P_{I(c)}$.  We can now recall the concept of admissible vector field:

\begin{defin} \label{definAVF}
	A vector field $f \colon P \to P$ is $\CG$-admissible if the following hold:
	\begin{itemize}
		\item [(a)] Domain condition: For all $c \in \CC$, the component $f_c$  depends only on the internal phase space; that is, there exists $\hat{f}_c \colon P_c \times P_{\CT(I(c))} \rightarrow P_c$ such that
		\begin{eqnarray}
		f_c(x)=\hat{f}_c(x_c, \pi_{I(c)}(x)). \label{GST32} 
		\end{eqnarray}
		\item [(b)] Equivariance condition: For any pair $c,d \in \CC$ and $\beta \in 
		B(c,d)$,
		\begin{eqnarray}
		\hat{f}_d(x_d, {y})=\hat{f}_c(x_d, \beta^*(y)) \quad \forall (x_d, {y}) \in P_d \times P_{I(d)}. \label{fbeta}
		\end{eqnarray}
	\end{itemize}
\end{defin}
We notice that when $c=d$ the equivariance condition means that $\hat{f}_c$ is $B(c,c)$-invariant in the $y$-variable. In this case, we write
\begin{eqnarray}\hat{f}_c(x_c, \overline{y^1_1, \ldots, y^1_{|I(c) \cap \xi_1|}}, \ldots,\overline{y^r_1, \ldots, y^r_{|I(c) \cap \xi_r|}}), \label{fbar}
\end{eqnarray}
where over bar means invariance under the permutations of these variables. \\

We denote by $\CF({\CG};P)$ the set of smooth $\CG$-admissible  vector fields on $P$, and by $\CL({\CG};P)$ the set of linear $\CG$-admissible vector fields on $P$. We finish this subsection with the following characterization, which is straightforward and shall be very useful in the sequel:
\begin{eqnarray}
	\CL(\CG;\Rr^n)= \bigl\{ diag(t_1, \ldots,t_n)+\sum_{\xi \in \CE/{\sim_{\CE}}} diag(t_1^{\xi}, \ldots, t_n^{\xi})A_{\CG}^{\xi} \  : \ i \sim_I j \Rightarrow t_i=t_j, \, t_i^{\xi}=t_j^{\xi}  \bigr\}.\, \,  \, \, \, \label{LGRn}
	\end{eqnarray}

\subsection{ODE-equivalence}

Topologically distinct network graphs can determine the same space of smooth admissible vector fields. This statement is precisely the notion of \textit{ODE-equivalence} between network graphs (see \cite{DiasStewart}). It turns out that verifying the ODE-equivalence can be reduced to the linear level. In fact,  as established in \cite[Theorem 5.1, Corollary 7.7]{DiasStewart},  two network graphs $\CG_1$ and $\CG_2$ of $n$ cells are ODE-equivalent if, and only if, there exists an input-preserving bijection $\gamma \colon \CC_1 \rightarrow \CC_2$ such that
\begin{eqnarray} \label{eq:ode equivalence}
\CL(\CG_1,\Rr^n)&=&\gamma^T \CL(\CG_2,\Rr^n) \gamma \label{eq: gama ODE}.
\end{eqnarray}

It is direct from \eqref{LGRn}  that if $\CG_1$ and $\CG_2$ are isomorphic, then they are ODE-equivalent. The converse holds for the homogeneous simple case:

\begin{prop} \label{prop:homog simple graphs}
Let $\CG_1$ and $\CG_2$ be homogeneous simple network graphs. If $\CG_1$ and $\CG_2$ are ODE-equivalent, then they are isomorphic. 
\end{prop}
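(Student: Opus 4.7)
The plan is to leverage the linear-algebraic characterization \eqref{eq: gama ODE} of ODE-equivalence together with the rigid $\{0,1\}$-structure of adjacency matrices in the simple case. First I would pick the input-preserving bijection $\gamma\colon \CC_1\to\CC_2$ supplied by \eqref{eq: gama ODE} and let $P$ denote its permutation matrix. Setting $\tilde A^{\eta}=P^{T}A_{\CG_2}^{\eta}P$ -- which is again a $\{0,1\}$-matrix with zero diagonal, and whose distinct members have pairwise disjoint supports because $\CG_2$ is simple and loop-free -- the hypothesis becomes, via \eqref{LGRn} and homogeneity,
\[
\operatorname{span}\bigl\{I_n,\, A_{\CG_1}^{\xi_1},\ldots,A_{\CG_1}^{\xi_r}\bigr\}
=\operatorname{span}\bigl\{I_n,\, \tilde A^{\eta_1},\ldots,\tilde A^{\eta_s}\bigr\}.
\]

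Next I would extract edge-type information from this span equality. Each $A_{\CG_1}^{\xi_i}$ and each $\tilde A^{\eta_j}$ has zero diagonal, and different edge types produce disjoint supports, so both generating sets are linearly independent, forcing $r=s$. Expanding $\tilde A^{\eta_j}=c_0 I_n+\sum_i c_i A_{\CG_1}^{\xi_i}$, the zero diagonal of $\tilde A^{\eta_j}$ gives $c_0=0$, and then reading this identity on the (pairwise disjoint) supports of the $A_{\CG_1}^{\xi_i}$ forces $c_i\in\{0,1\}$ since $\tilde A^{\eta_j}$ is itself a $\{0,1\}$-matrix. Thus each $\tilde A^{\eta_j}$ is the indicator of a disjoint union of some $\operatorname{supp}(A_{\CG_1}^{\xi_i})$; symmetrically, each $A_{\CG_1}^{\xi_i}$ is the indicator of a disjoint union of some $\operatorname{supp}(\tilde A^{\eta_j})$. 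The two edge-type partitions of the common edge set then mutually refine each other and so must coincide, yielding a bijection $\phi$ of edge types with $\tilde A^{\eta_{\phi(i)}}=A_{\CG_1}^{\xi_i}$.

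Finally I would unpack this as an isomorphism. Undoing the conjugation gives $A_{\CG_2}^{\eta_{\phi(i)}}=P\,A_{\CG_1}^{\xi_i}\,P^{T}$, so the cell relabeling $\gamma$ together with the edge-type relabeling $\phi$ turns the adjacency matrices of $\CG_1$ into those of $\CG_2$ type by type, which is exactly the adjacency-matrix criterion for isomorphism recalled in Section~\ref{sec:preliminaries}. I expect the only delicate point to be the mutual-refinement step -- that a $\{0,1\}$-combination of disjointly supported $\{0,1\}$-generators which is itself $\{0,1\}$-valued must select a sub-family of the generators -- and this is precisely where the simplicity hypothesis (disjoint supports across distinct edge types) and the absence of loops (zero diagonal, ruling out any contribution from the $I_n$ direction when matching 0-1 matrices) are used crucially; dropping either assumption would permit coefficients outside $\{0,1\}$ and break the argument.
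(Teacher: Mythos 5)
Your proposal is correct and follows essentially the same route as the paper's proof: both use the linear characterization of ODE-equivalence, the pairwise-disjoint $\{0,1\}$ supports of the adjacency matrices of a simple loop-free graph to get linear independence and equal numbers of edge types, and then force the expansion coefficients into $\{0,1\}$ to match the adjacency matrices up to the conjugation by $\gamma$. The only difference is presentational: the paper treats the case of two edge types ``for simplicity'' by evaluating at a single entry $(i,j)$, whereas you carry out the general case via the mutual-refinement argument on the two support partitions.
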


\begin{proof}
Let $\gamma\colon \CC_1 \rightarrow \CC_2$  be a bijection that realizes the ODE-equivalence (\ref{eq:ode equivalence}). 
Since each graph is simple, if a position of any of its adjacency matrix is 1, then the same position in all the other of its adjacency matrices is zero. In particular, the adjacency matrices of a simple graph are linearly independent. Hence, from \eqref{LGRn}, $\CG_1$ and $\CG_2$ have the same number of adjacency matrices. For simplicity, suppose this number is $2$ and  let $A_{\CG_1}^k $ and $A_{\CG_2}^k $, $k = 1,2,$ be their adjacency matrices.

Let $s^1,s^2$ be such that 
$$ A_{\CG_1}^1=s^1(\gamma ^T A_{\CG_2}^1\gamma)+s^2(\gamma^T A_{\CG_2}^2\gamma).$$
Now, let $(i,j)$ be such that $(A_{\CG_1}^1)_{ij}=1$. Without lost of generality, we can assume that 
\begin{eqnarray} (\gamma ^T A_{\CG_2}^1\gamma)_{ij}=1 \mbox{ and }(\gamma ^T A_{\CG_2}^2\gamma)_{ij}=0. \nonumber \end{eqnarray}
Hence, 
\begin{eqnarray} 
A_{\CG_1}^1=(\gamma ^T A_{\CG_2}^1\gamma)+s^2(\gamma^T A_{\CG_2}^2\gamma). \label{eq1} 
\end{eqnarray}
But $(A_{\CG_1}^2)_{ij}=0$. It then follows that 
$$ A_{\CG_1}^2=(\gamma^T A_{\CG_2}^2\gamma),$$
and then $s^2=0$ in \eqref{eq1}. So the result follows.  
\end{proof}
The proposition above is very useful to deduce the complete list of distinct non equivalent admissible graphs of a given vector field for the homogeneous simple case. See the case study of Subsection~\ref{sec: vf on R6}.  

\subsection{Synchrony} \label{subsec:ber}
For $f \in \CF({\CG};P)$,  a
synchrony in the coupled cell system 
$$ \dot{x}=f(x)$$
occurs if two or more cells of a solution $x(t)$ behave identically, that is, if $c$ and $d$ are any two of these cells, then 
$$x_c(t)=x_d(t), \, \forall t.$$
The authors in \cite{SMT, SMP} characterize occurrence of synchronies in the dynamics from the graph architecture. Here we briefly present the general idea and the main result: Let $\bowtie$ be an equivalence relation on $\CC$. Then 
\begin{itemize}
	\item  $\bowtie$ is {\it balanced} if $c \bowtie d$ implies that there exists  $\beta \in B(c,d)$ such that $\CT(e) \bowtie \CT(\beta(e)), \, \forall e \in I(c)$; 
	\item $\bowtie$ is {\it robustly polysynchronous} if for any choice of total phase space $P$ we have $f(\Delta_{\bowtie}) \subset \Delta_{\bowtie}$, 
 for all $f \in \CF(\CG;P)$, where $$\Delta_{\bowtie}=\{x \in P \tq x_c=x_d \Leftrightarrow c \bowtie d\}. $$
\end{itemize}
Such a polydiagonal subspace is called a {\it synchrony subspace}. 
Notice that the first notion above is related directly to the graph $\CG$, whereas the second  is related to the class $\CF(\CG;P)$, but it turns out that both notions are equivalent, which has been  established in \cite[Theorem 4.3]{SMT}. 
Moreover, these are also equivalent to
\begin{eqnarray} A_{\CG}^{\xi}(\Delta_{\bowtie}) \subset \Delta_{\bowtie}, \, \, \, \forall \xi \in \CE/_{\sim_{\CE}}. \label{eq: condition to balanced} 
\end{eqnarray} 
In words,  synchrony subspaces are invariant  under all adjacency matrices of $\CG$.

\begin{obs} \label{obs code} {Based on (\ref{eq: condition to balanced}), the authors in \cite{AguiarDiasAlg} develop an algorithm to return all the possible synchronies for a given network graph. The main idea is to look for polydiagonal invariant subspaces from the eigenvectors of the Jordan decomposition of the adjacency matrix. We have implemented the algorithm with a Mathematica code  and the data in Table~1 (see Section~\ref{sec:van der Pol}) and Table~2 (see Subsection~\ref{sec: vf on R6}) are deduced with an extensive use of this code.}
\end{obs}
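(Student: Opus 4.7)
The remark bundles together three factual assertions: (i) condition (\ref{eq: condition to balanced}) is the right starting point for an algorithmic enumeration of synchrony subspaces; (ii) the algorithm of \cite{AguiarDiasAlg} correctly outputs them by working from the Jordan decomposition of the adjacency matrices; and (iii) a Mathematica implementation reproduces the data in Tables 1 and 2. To the extent that there is anything to establish here, I plan to justify (i)--(ii), and to indicate how the computational claim (iii) may be cross-checked by hand on small instances.

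For (i)--(ii) my approach proceeds in three steps. First, I would invoke the equivalence already recalled immediately before the remark: a polydiagonal subspace $\Delta_{\bowtie}$ is a synchrony subspace of $\CG$ precisely when it is left invariant by every adjacency matrix $A_{\CG}^{\xi}$, $\xi \in \CE/\!\sim_{\CE}$. This reduces the problem to enumerating polydiagonal subspaces of $P$ that lie in the intersection of the invariant-subspace lattices of the finitely many matrices $A_{\CG}^{\xi}$. Second, I would use the classical fact that the invariant subspaces of a single linear operator on a finite-dimensional space are compatible with its primary (generalized eigenspace) decomposition, so that the Jordan canonical form yields a finite combinatorial description of the invariant-subspace lattice of each $A_{\CG}^{\xi}$; intersecting these lattices across the edge types produces the candidate pool. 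Third, I would verify that filtering these candidates by the polydiagonal condition --- asking whether the subspace coincides with $\Delta_{\bowtie}$ for some equivalence relation $\bowtie$ on $\CC$ --- returns exactly the synchrony subspaces, which is immediate from the definition of $\Delta_{\bowtie}$ and the characterisation of balanced equivalence relations.

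For (iii) the strategy is a straightforward correctness check of the code: for each entry in Tables 1 and 2, extract the associated adjacency matrices and verify by direct linear algebra that every listed polydiagonal is invariant under each $A_{\CG}^{\xi}$, and that no further polydiagonal subspace is. For graphs of the modest size appearing in Section~\ref{sec:van der Pol} and Subsection~\ref{sec: vf on R6}, this manual check is tractable and would serve as an independent confirmation of the automated output.

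The main obstacle I anticipate is combinatorial rather than conceptual. Enumerating polydiagonal subspaces of $P$ amounts to iterating over partitions of $\CC$, whose count grows like the Bell numbers, and each candidate must be tested against every $A_{\CG}^{\xi}$; moreover, the Jordan decomposition can be delicate when adjacency matrices carry repeated eigenvalues, which is the generic regime for the symmetric or regular graphs of interest here. The AguiarDias algorithm mitigates this by operating symbolically on the integer entries of the adjacency matrices, so a faithful Mathematica implementation should preserve exactness; any residual difficulty in replicating the tables would most likely trace to the partition-enumeration loop, not to the theoretical content of (\ref{eq: condition to balanced}).
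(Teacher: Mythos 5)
The statement you were asked to prove is a remark, and the paper contains no proof of it: its content is a citation of \cite{AguiarDiasAlg} together with a report that the authors implemented that algorithm in Mathematica and used it to generate Tables~1 and~2. The only mathematical ingredient, the equivalence between balanced relations and invariance of $\Delta_{\bowtie}$ under every adjacency matrix $A_{\CG}^{\xi}$, is quoted from the literature just before the remark as (\ref{eq: condition to balanced}), so there is no argument of the paper's to compare yours against. Your points (i) and (iii) are fine as far as they go: (i) is exactly the recalled equivalence, and your hand-verification plan for (iii) is tractable, since testing a polydiagonal for invariance under two $6\times 6$ integer matrices over all partitions of six cells (Bell number $B_6=203$) is routine.

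There is, however, a genuine gap in your step two. The invariant-subspace lattice of a single operator admits a finite combinatorial description only when the operator is nonderogatory; as soon as some eigenspace has dimension at least two, \emph{every} subspace of that eigenspace is invariant, so the lattice is a continuum and ``intersecting these lattices across the edge types to produce the candidate pool'' is not an effective finite procedure. Repeated eigenvalues are precisely the regime at hand --- you say so yourself in your last paragraph --- e.g.\ the circulant $A_{G_6}$ of Section~\ref{sec:van der Pol} has eigenvalues of multiplicity two, and the graphs of Table~2 are comparably symmetric. The algorithm of \cite{AguiarDiasAlg} runs in the opposite order: the finite ambient set is the set of polydiagonals, indexed by partitions of $\CC$, and the eigenvector/Jordan data of the adjacency matrices is used to decide efficiently which polydiagonals are invariant (and to organize them into a lattice), not to enumerate all invariant subspaces first and filter afterwards. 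Your final paragraph, which iterates over partitions and tests each against every $A_{\CG}^{\xi}$, is the correct order and quietly contradicts your step two; the fix is simply to discard the lattice-intersection step and let the polydiagonal condition come first, with the Jordan decomposition serving as the acceleration device the remark alludes to.
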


We end this subsection recalling that there are synchronies inherited by the symmetries of the network graph: for the group ${\rm Aut}(\CG)$ of automorphisms of $\CG$ (see \cite[Definition~3.1]{AntoneliStewart}), the fixed-point subspace of any subgroup $\Sigma <  {\rm Aut}(\CG)$  is a robust polydiagonal subspace $\Delta_{\bowtie}$ of $\Rr^n$, where $\bowtie$ is defined through $\Sigma$ \cite[Proposition~3.3]{AntoneliStewart}. There are particular classes of network graphs for which all the balanced equivalence relations are precisely the ones defined in this way. This is the case of the network graph ${\rm G}_6$ given by the regular ring with nearest and next nearest neighbor coupling. This network graph is the admissible graph of the coupling of van der Pol oscillators of Section~\ref{sec:van der Pol}. We leave to that section the details for this graph synchronies.

\section{The realization of the admissible graphs} \label{sec:realization}
As recalled in Section~\ref{sec:preliminaries}, based on the action of a groupoid of symmetries of a given network graph, the authors in \cite{SMT} and \cite{SMP}  formulate in algebraic terms the class of admissible vector fields on the total phase space that are `compatible' with the labeled structure of a given graph.  In this section we follow that formulation to study the problem in the inverse direction. We give the procedure to construct the network graphs associated with a given vector field, namely the admissible graphs for this vector field. \\

Before that, we point out that permutations play a major role in the two directions. Let us illustrate the two approaches together, with the elementary graphs of Fig.~\ref{fig:simplest}. The general admissible vector field for the network graph on the left is of the form
\[ \begin{array}{ll}
\dot{x_1} = & f(x_1, x_2, x_3) \\
\dot{x_2} = & g(x_2, x_1) \\
\dot{x_3} = & h(x_3, x_1), \\
\end{array} \]
for any three-variable function $f$ and two-variable functions $g$ and $h$. But it should be reasonable to go on with the analysis assuming an additional necessary condition, namely $f$ non invariant under the permutation of $x_2$ and $x_3$ (together with $g$ and $h$ distinct), in the same way that this permutation invariance is a necessary condition for the above vector field to be admissible for the network graph on the right (together with $g = h$). For the inverse problem, the possible (and not possible) permutation invariances can be taken into account in the initial process of constructing the graphs, soon after the choice of the number of vertices. In fact, this is the basis for the stepwise procedure of Subsection~\ref{subseq:method} to produce admissible graphs, as well as `optimized' admissible graphs as a final step.  By an optimized admissible graph we mean a graph that indeed depicts the permutation invariances of the components of this vector field with the least number of edge types. In the broad sense of admissibility, any vector field yields an admissible graph, for we can take the complete graph with all edges in  distinct classes for example. However, this graph has no nontrivial symmetries and should not be of much interest if we are to model many features of coupled dynamics such as synchronization.

\begin{figure}[h!]
	\centering %
	\includegraphics[width=5cm]{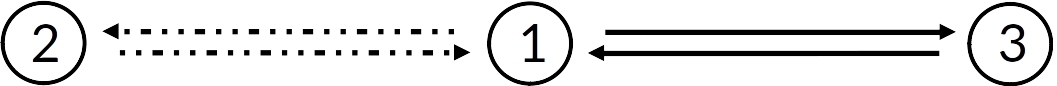} \quad \quad \quad \quad  \includegraphics[width=5cm]{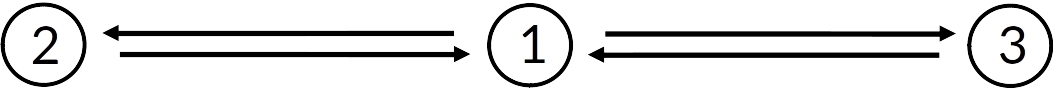} 
	\caption{\small (a) A 3-cell graph with distinct couplings and (b) a 3-cell graph with identical couplings. }
	\label{fig:simplest}
\end{figure}

\subsection{The stepwise procedure} \label{subseq:method}

For a given $C^1$ vector field, this is a step-by-step procedure to give the complete list of admissible graphs, up to ODE-equivalence. It is also an optimization method of choosing  systematically input values  from within an allowed set from the mapping components. The general idea is: compare the components according to \eqref{fbeta}; by comparison,  define the equivalence relation $\sim_{\CC}$ of vertices and also the equivalence relations $\sim_k$ of edges in four steps, $k=1,2,3,4$. The relation   $\sim_{\CC}$ is established in step 1. For $k = 2, 3, 4$, the relation $\sim_k$ is constructed in the $k^{\rm th}$ step and {its classes are obtained by joining classes of $\sim_{k-1}$.} \\

Let $f \in C^1(\Rr^n;\Rr^n)$ be a given vector field.  \\

\noindent \textbf{Step 1.} Choose the number of cells $n_0$, $\CC = \{1, \ldots, n_0\}$. The natural choice is $n_0 = n$, in which case each cell domain is $P_c=\Rr$. If  $n_0 < n$, then this naturally involves other choices, namely of the cell domains. From the definition of admissible vector field, the relation $\sim_{\CC}$ must indicate the compatibility between cell domains; hence, the coarsest relation $\sim_{\CC}^0$ we can define on $\CC$ is 
$$ c \sim_{\CC}^0 d \quad \Leftrightarrow \quad P_c=P_d,$$ 
for $P_1 \times \ldots \times P_{n_0}=\Rr^n, \ P_c\neq \{0\},  \forall c \in \CC$. Let $\sim_{\CC}$ be  {a refinement of or equal to  $ \sim_{\CC}^0$ on $\CC$, that is,
$$c \sim_{\CC} d \Longrightarrow c \sim_{\CC}^0 d.$$} 
{An edge from a vertex $d$ to a vertex $c$ in an admissible graph shall represent that `$c$ depends on $d$'}. However, posing this condition may not be a simple task when we consider multiarrows. The procedure is supported by the following two mappings: 

For each $c \in \CC$,  consider the canonical submersion 
\begin{eqnarray}
\pi_c \colon \Rr^n &\longrightarrow & \prod_{d=1}^{n_0} (P_d)^{m_{cd}} \label{eqcap1-1} \\
 x & \longmapsto & (\ldots, \stackrel{\tiny{m_{cd} {\mbox{ times }}}}{\overbrace{x_d, \ldots, x_d}}, \ldots), 
 \end{eqnarray}  
 which indicates how the variable $x_d$ repeats and the number $m_{cd}$ of this repetition. Also, for the $c$-component of $f$, $f_c$,  we  consider an associated {\it generating function} \begin{eqnarray}
 \hat{f}_c \colon {P_c \times} \prod_{d=1}^{n_0} (P_d)^{m_{cd}} \rightarrow P_c\label{eqcap1-2} 
 \end{eqnarray}  
such that 
$$\dfrac{\partial \hat{f}_c}{\partial y_e} \not\equiv 0,$$ 
for any variable $y_e$ of $\hat{f}_c,$ and   
$$f_c(x)=\hat{f}_c(x_c,\pi_c(x)), \forall x \in P.$$
Thus, the number of arrows from $d$ to $c$ is $m_{cd}.$ The simplest choice is  to consider, for every $c \in \CC$, only the variables such that ${\partial {f}_c}/{\partial x_d} \not\equiv 0$ and  each $x_d$ appearing once in $\pi_c$. This yields $\hat{f}_c=f_c$, and the resulting graph is a simple graph. 

With respect to the set $\CE$ of edges, we now establish the equivalence relation $\sim_1$. In this step,  we take all edges in distinct classes, that is,  $B_1(c,c)$ is trivial and $B_1(c,d)$ is empty if $c \neq d$ and, therefore, the equivariance condition is trivially satisfied. At this point we have the relations $\sim_{\CC}$ for cells and $\sim_1$ for edges. Let $\CG_1$ denote such  network graph. \\


\noindent \textbf{Step 2.} For each $c \in \CC$, take the unique partition of {$I(c)$},
\begin{eqnarray} I(c)= \big(K^1:=\{e^1_1(c), \ldots, e^1_{s_1}(c)\}\big)\dot{\cup} \ldots\dot{\cup}\big(K^r:=\{e^r_1(c), \ldots, e^r_{s_r}(c)\}\big), \label{step2}
\end{eqnarray}
such that $\hat{f}_c$ is invariant under all permutations of $y_{e^{r'}_1}, \ldots,y_{e^{r'}_{s_{r'}}}$, for all $r'=1, \ldots, r.$ Each $K_{r'}$ is contained in a $\sim_{\CC}$-class and it is maximal with respect to these proprieties.   Consequently, $r$ is minimal. Hence, by this construction,  $f_c$ is invariant under the group ${\bf S}_{s_1} \times \ldots \times {\bf S}_{s_r}$. From this,  we define the equivalent relation $\sim_{2}$ on $\CE$,  
$$e^i_k \sim_2 e^j_l \, \, \Leftrightarrow i=j.$$  
In this way, $B(c,c)= {\bf S}_{s_1} \times \ldots \times {\bf S}_{s_r}$ and $B(c,d)$ is empty. \\

\noindent \textbf{Step 3.} Construct an input equivalence relation, for which we use again the notation $\sim_I$, and an equivalence relation $\sim_3$ on $\CE$. Here the components of the vector field are compared. For distinct  $c, d \in \CC$, $c \sim_{\CC} d$, consider the partitions as in (\ref{step2}) constructed in step 2,
\begin{eqnarray}
I(c)= \{e^1_1(c), \ldots, e^1_{s_1}(c)\}\dot{\cup} \ldots\dot{\cup}\{e^r_1(c), \ldots, e^r_{s_r}(c)\}, \nonumber \\
I(d)= \{e^1_1(d), \ldots, e^1_{q_1}(d)\}\dot{\cup} \ldots\dot{\cup}\{e^p_1(d), \ldots, e^p_{q_p}(d)\}. \nonumber
\end{eqnarray}
{If $r=p$, $s_i=q_i$, $\forall i=1, \ldots r$, and   
\begin{eqnarray}
\hat{f}_c(x_c, \overline{y_{e^1_1(c)}, \ldots, y_{e^1_{s_1}(c)}}, \ldots,\overline{y_{e^r_1(c)}, \ldots, y_{e^r_{s_r}(c)}} ) =\hat{f}_d(x_c, \overline{y_{e^1_1(c)}, \ldots, y_{e^1_{s_1}(c)}}, \ldots,\overline{y_{e^r_1(c)}, \ldots, y_{e^r_{s_r}(c)}} ), \label{step3}
\end{eqnarray}
define the equivalence relation $\sim_3$ on $\CE$, 
\begin{eqnarray} e^i_k(c) \sim_3 e^j_l(d) \, \, \Leftrightarrow i=j, \, \,  \label{step3E}
\end{eqnarray}
which gives the input equivalence $c \sim_I d$. On the other hand, if either $r \neq p$ or $s_i \neq q_i$ for some $i$, or if \eqref{step3} is not satisfied, then $c$ can not be input-equivalent to $d$.} \\

\noindent \textbf{Step 4.} Construct $\sim_4$. From the input equivalence relation obtained in step 3,  if $c \not\sim_I d,$  then $e(c) \sim_4 e'(d)$ has no effect from the point of view of the vector field, except that $c \sim_{\CC} d$. Therefore, edges can be attributed to the same $\sim_4$-class as long as the input equivalence classes are unchanged. This attribution provides further reduction on the number of classes of step 3. 

\begin{obs} \label{rmk: steps}
{\rm \noindent (a) Each $I$-class $Q$ determines uniquely the natural numbers $r=:r(Q), s_1,\ldots, s_r$ in \eqref{step3}. \\

\noindent (b) The relation \eqref{step3} may not be uniquely satisfied. This is the case if for example a generating function $\hat{f}_c$ is setwise invariant: suppose that $s_u=s_v$ and \begin{eqnarray}\hat{f}_c(\ldots, \overline{y_{e^u_1}, \ldots, y_{e^u_{s_u}}}, \ldots,\overline{y_{e^v_1}, \ldots, y_{e^v_{s_v}}} ,\ldots)  = \hat{f}_c(\ldots, \overline{y_{e^v_1}, \ldots, y_{e^v_{s_v}}}, \ldots,\overline{y_{e^u_1}, \ldots, y_{e^u_{s_u}}} ,\ldots), \label{eq: collection of K^u}
\end{eqnarray}
	for some $1\leq u,v \leq r,$ that is, the two sets  $K^u$ and $K^v$ can be interchanged in the component $f_c$. In this case, there are two possible choices in \eqref{step3} and, therefore, two choices for \eqref{step3E}. More generally, for each $I$-class $Q$, there may exist $c \in Q$ such that the collection  $\{K^1, \ldots, K^r\}$ in \eqref{step2} can be partitioned into sets 
 \begin{equation}  \label{eq: sets}
 \{K^1, \ldots, K^{u_1}\}, \{K^{u_1+1}, \ldots, K^{u_1+u_2}\},\ldots, \{K^{u_1+ \ldots+ u_{v-1}+1}, \ldots,K^{u_1+ \ldots +u_{v}}\},
 \end{equation}
 where $u_1+ \ldots +u_{v}=r$, such that $\hat{f}$ is invariant by  permutations among the sets $K^{u_1+ \ldots+ u_{t-1}+1},$  $ \ldots, K^{u_1+ \ldots+ u_t}$, for all $t=1, \ldots , v$. In particular, 
 \begin{equation} \label{eq: sizes of the sets}
 | K^{u_1+ \ldots+ u_{{t}-1}+1}|= \ldots=|K^{u_1+ \ldots+ u_{{t}}}|, \ \  t=1, \ldots, v.
 \end{equation}
 This remark also yields Proposition~\ref{propvartheta}, which gives an upper bound for the number of admissible graphs for a vector field. \\

 \noindent (c) The graphs of step 4 are optimized graphs, in the sense that they contemplate all the permutation invariances of the components of the vector field with the least number of edge types. \\ 
 }
 \end{obs}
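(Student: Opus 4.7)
The plan is to establish the three assertions of the remark by reformulating Step~2 intrinsically and then tracing Steps~3 and~4.

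The substantive content is part~(a). For a fixed $c \in \CC$, define on $I(c)$ the relation $e \approx e'$ meaning that the transposition of $y_e$ and $y_{e'}$ leaves $\hat{f}_c$ pointwise invariant. Reflexivity and symmetry are immediate. For transitivity, if $e \approx e'$ and $e' \approx e''$, the two transpositions generate the full symmetric group on $\{e,e',e''\}$, so in particular the transposition of $y_e$ and $y_{e''}$ preserves $\hat{f}_c$. The $\approx$-classes are precisely the blocks $K^1,\ldots,K^r$ of~(\ref{step2}): invariance of $\hat{f}_c$ under the full symmetric group on each $\approx$-class follows since transpositions generate the symmetric group, and maximality holds because merging two distinct classes would force invariance under a cross-class transposition, contradicting their non-equivalence. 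Uniqueness of the partition yields uniqueness of $r$ and of the unordered tuple $(s_1,\ldots,s_r)$. To descend to the $I$-class $Q$, pick any $\beta \in B(c,d)$ with $c,d \in Q$; this is a bijection $\beta\colon I(c) \to I(d)$. The equivariance condition of Definition~\ref{definAVF}(b) gives $\hat{f}_d(x_d, y) = \hat{f}_c(x_d, \beta^*(y))$, and a direct computation shows that $\beta$ carries the relation $\approx$ on $I(c)$ onto the corresponding relation on $I(d)$. Hence the partitions are in bijection and $r(Q), (s_1,\ldots,s_r)$ depend only on $Q$.

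Parts~(b) and~(c) are bookkeeping once (a) is in hand. For~(b), I would observe that the full symmetry group of $\hat{f}_c$ on $P_c \times P_{I(c)}$ may strictly contain the product ${\bf S}_{s_1} \times \cdots \times {\bf S}_{s_r}$ recorded in Step~2: when blocks $K^u, K^v$ of equal size are setwise interchangeable as in~(\ref{eq: collection of K^u}), the extra block-permuting symmetries partition $\{K^1,\ldots,K^r\}$ into the super-blocks in~(\ref{eq: sets}), and the size equality~(\ref{eq: sizes of the sets}) is forced since only equally sized blocks can be permuted. Any relabeling of blocks consistent with this super-block structure is a valid choice in~(\ref{step3}) and hence in~(\ref{step3E}), yielding the claimed non-uniqueness. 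For~(c), Step~4 by construction merges edge classes whenever the merge does not disturb $\sim_I$, so the output is the coarsest refinement of $\sim_{\CE}$ compatible with the permutation invariances of $f$, giving the minimum number of edge types.

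The main obstacle is the groupoid-transport step in~(a): one must verify that $\approx$ is transported correctly across distinct cells of an $I$-class, using the full equivariance condition for $\beta \in B(c,d)$ with $c \neq d$, rather than only the vertex-group self-symmetries in $B(c,c)$. Concretely, one needs $\beta^*$ to intertwine a transposition on $P_{I(d)}$ with the corresponding transposition on $P_{I(c)}$, which is a short computation from the definition $\beta^*(y)_e = y_{\beta(e)}$, but is the place where all three claims ultimately rest.
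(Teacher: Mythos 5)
The paper states this as a Remark and offers no proof at all, so your write-up supplies justification that the authors leave implicit; judged on its own terms it is essentially correct. Your key contribution is part (a): defining $e \approx e'$ by transposition-invariance of $\hat{f}_c$ and getting transitivity from the fact that $\langle (e\,e'),(e'\,e'')\rangle$ is the full symmetric group on three letters is exactly the right way to see that the maximal partition \eqref{step2} is unique, and the intertwining computation $\beta^*\circ\tau = \sigma\circ\beta^*$ for transporting $\approx$ along $\beta \in B(c,d)$ checks out. Two small points you should tighten. First, your relation $\approx$ must be cut down by the requirement that $\CT(e) \sim_{\CC} \CT(e')$: the paper explicitly demands that each block $K^{r'}$ lie inside a single $\sim_{\CC}$-class, and since $\sim_{\CC}$ is allowed to be strictly finer than ``same cell phase space,'' a transposition could accidentally preserve $\hat{f}_c$ across two cells that are not $\sim_{\CC}$-equivalent; without this restriction your $\approx$-classes could be strictly coarser than the $K^{r'}$. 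The modified relation is still transitive, so the fix is painless. Second, the groupoid-transport step is somewhat redundant in context: in the procedure, $c \sim_I d$ is \emph{defined} in Step 3 by the existence of a block-matching with $r=p$ and $s_i=q_i$, so the equality of the numerical data across an $I$-class is built into the definition, and the real content of (a) is only the per-cell uniqueness of the partition (your $\approx$ argument) — which then also underwrites (b), since the same generation argument applied to block-swaps shows the super-block structure \eqref{eq: sets} is well defined and forces \eqref{eq: sizes of the sets}. Your treatment of (b) and (c) matches the descriptive level of the remark itself.
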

 
 \noindent (d) As expected, the groupoid of symmetries of the graphs obtained in step 4 may not comprise all the symmetries in the components of $f$. In fact, these extra symmetries may lead to distinct ODE-classes of admissible graphs (see Theorem~\ref{mainteo}). 

\subsection{Example: a vector field on $\Rr^4$} \label{subsec:vf on R4}
Consider the vector field $f$ on $\Rr^4$ whose components are
\begin{equation} \label{eq:vc on R4}
	\begin{array}{lll}
	f_1(x_1,x_2,x_3,x_4)&=& x_1x_2+x_3x_4  \\
	f_2(x_1,x_2,x_3,x_4)&=& x_1x_2x_3x_4 \\
	f_3(x_1,x_2,x_3,x_4)&=& x_3x_4+x_1x_2  \\
	f_4(x_1,x_2,x_3,x_4)&=& x_1x_2x_3x_4. 
	\end{array}
\end{equation}	
As we shall see, this vector field admits admissible simple graphs with four, three and two cells. 
\\

\noindent \textbf{4 cells.} 
Step 1: The number of cells is the domain dimension. So $\CC = \{1,2,3,4\}$. Since for all $i,j = 1, \ldots 4$  ${\partial f_i}/{\partial x_j}$ is not identically zero, there must be an edge from any vertex to any other vertex, so $\CE = \CC \times \CC$. The network graph $\CG_1$ is then the simple complete graph with 12 possibly distinct arrows. Step 2: Starting with $f_1$, $x_1$ is the distinguished variable, so the unique permutation invariance is over the variables $x_3$ and $x_4$, so $(3,1) \sim_2 (4,1)$. For $f_2$, $x_2$ is the distinguished variable and $f_2$ is invariant under permutation over $x_1, $ $x_3$ and $x_4$, so $(1,2) \sim_2 (3,2) \sim_2 (4,2)$. Similar constructions hold for $f_3$ and $f_4$, respectively. Hence,
\begin{equation} \nonumber
  I(1) = \{ 2\} \cup \{3, 4\}, \ \ \ 
 I(2) =\{1, 3, 4\}, \ \ \
 I(3) = \{ 4\} \cup \{1, 2\}, \ \ \ 
 I(4) = \{ 1, 2, 3\},
 \end{equation}
and the vertex groups are $B(1,1)$, $B(3,3)$ isomorphic to ${\rm S}_1 \times {\rm S}_2 \simeq {\rm S}_2$ and  $B(2,2),  B(4,4)$ isomorphic to ${\rm S}_3$. The graph is $\CG_2$ given in Fig.~\ref{fig:10}, with six $\sim_2$-classes. 
Step 3: This is the correlation among components and we have that $1 \sim_2 3 \not\sim_2 2 \sim_2 4$. It then follows that the new graph $\CG_3$ is given in Fig~\ref{fig:11}, with three $\sim_{3}$-classes. Step 4: Up to  input equivalence, $\sim_3$ can be refined in two distinct ways. In fact, $1 \sim_I 3 \nsim_I 2 \sim_I 4$, so the edges between vertices 1 and 2  (or 3 and 4) can be taken in distinct edge classes (graph $\CG_4^1$ in Fig.~\ref{fig:12}) or in the same edge class (graph $\CG_4^2$ in Fig.~\ref{fig:12}). \\

\begin{figure}[h] 
	\centering 
	\includegraphics[width=3cm]{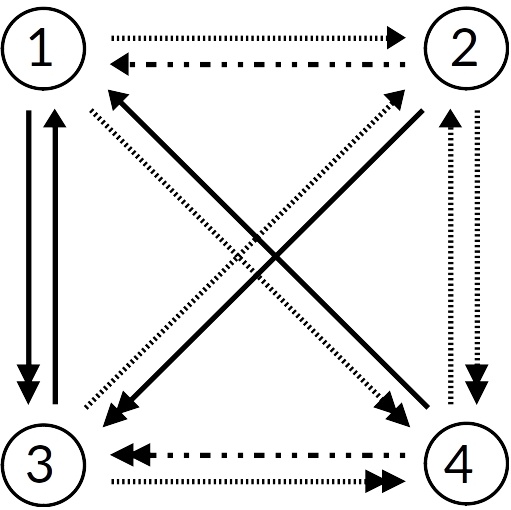} 
		\caption{\small Admissible graph $\CG_2$ for (\ref{eq:vc on R4}) with six $\sim_2$-classes.}
\label{fig:10}
\end{figure}

\begin{figure}[h] 

	\centering 
	\includegraphics[width=3cm]{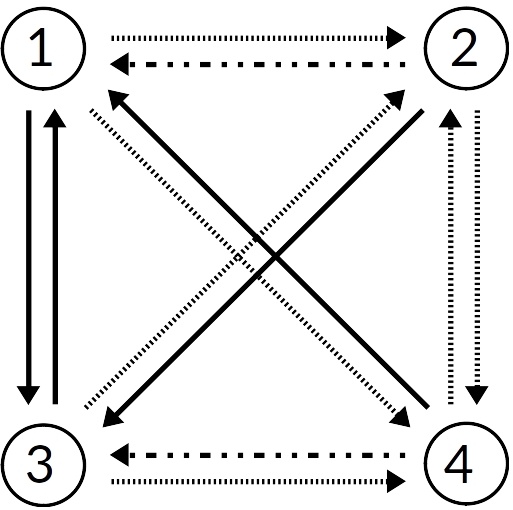} 
	\caption{\small Admissible graph $\CG_3$ for (\ref{eq:vc on R4}) with three $\sim_3$-classes.   }
	\label{fig:11}
\end{figure}

\begin{figure}[h] 
    \centering 
	\includegraphics[width=6.5cm]{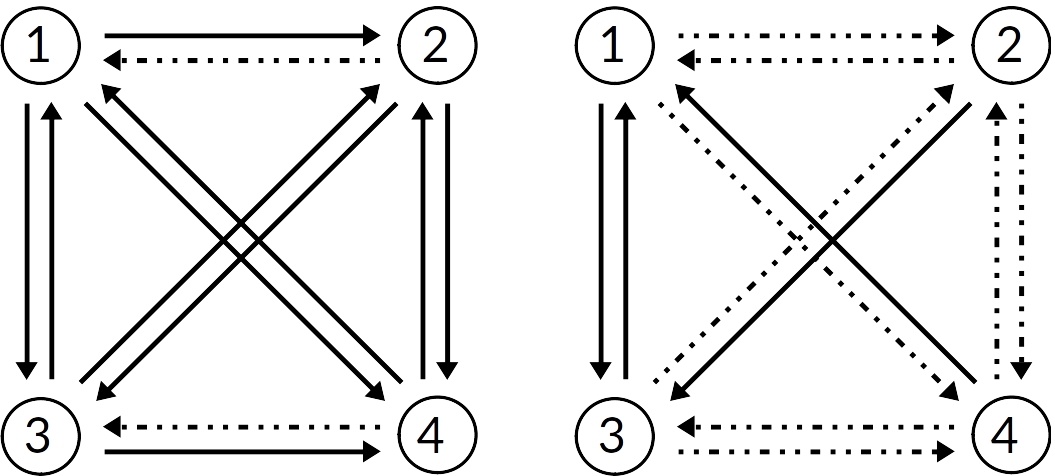}
	\caption{\small Admissible graphs $\CG_4^1$ (left) and $\CG_4^2$ (right)  for (\ref{eq:vc on R4}) with two $\sim_4$-classes.   }
	\label{fig:12}
\end{figure}

\noindent \textbf{3 cells.} Step 1: Choose the cell domains to be $P_1=\Rr^2$ with coordinates $y_1=(x_1,x_2)$, and $P_2=\Rr$, $P_3=\Rr$ with coordinates $y_2=x_3$, $y_3=x_4$, respectively. 
So $\CC = \{ 1,2,3\}$ and the $\CC$-classes are $\{1\}$ and $\{2,3\}$. Rewrite (\ref{eq:vc on R4}) as
	\begin{eqnarray}
	g_1(y_1,y_2,y_3)&=& (x_1x_2+x_3x_4,x_1x_2x_3x_4) \nonumber \\
	g_2(y_1,y_2,y_3)&=& x_1x_2+x_3x_4 \nonumber \\
	g_3(y_1,y_2,y_3)&=& x_1x_2x_3x_4. \nonumber 
	\end{eqnarray}
In step 2, we obtain the graph of Fig.~\ref{fig14}. {Regarding the vertex groups, we have $B(1,1) \simeq {\bf S}_2$. Also,  $I(2)$ is formed by two arrows of different types, so these can not be permuted and the unique bijection of $B(2,2)$ is the identity. The same goes for $I(3)$. Hence, $B(2,2)\simeq B(3,3) \simeq \{I\}$. } 
	Step 3 gives the three $I$-classes $\{1\}, \{2\}, \{3\}$. And moving to step 4, only one graph is deduced, by $(2,3) \sim_{\CE} (3,2)$; see Fig~\ref{fig15}. 
 
	\begin{figure}[H]
		\centering %
		\includegraphics[width=5cm]{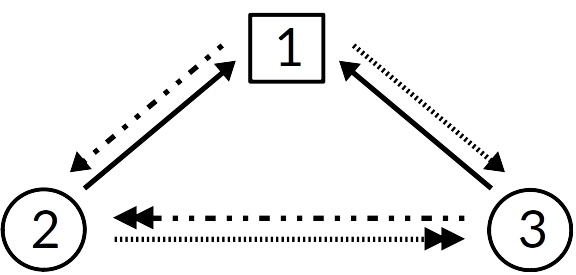} 
			\caption{\small An admissible graph $\CG_2$ for (\ref{eq:vc on R4}) seen as a network of 3 cells.}
		\label{fig14}
	\end{figure}
	
	\begin{figure}[H]
		\centering
		\includegraphics[width=5cm]{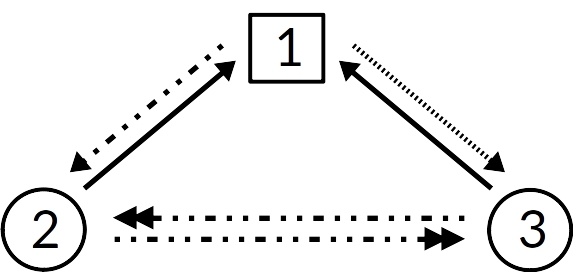} 
		\caption{\small An admissible graph $\CG_4$ for (\ref{eq:vc on R4})  seen as a network of {3} cells.}
		\label{fig15}
		\end{figure}

\noindent 	\textbf{2 cells}: In this case, we choose the cell phase spaces $P_1=\Rr
	^2$ with coordinate $y_1=(x_1,x_2)$ and $P_2=\Rr
	^2$ with coordinate $y_2=(x_3,x_4)$ and rewrite (\ref{eq:vc on R4}) as 
	
	\begin{eqnarray}h_1(y_1,y_2)=(f_1,f_2)=(x_1x_2+x_3x_4,x_1x_2x_3x_4) \nonumber \\
	h_2(y_1,y_2)=(f_3,f_4)=(x_1x_2+x_3x_4,x_1x_2x_3x_4). \nonumber
	\end{eqnarray}
	Clearly the optimized admissible graph for this network is as in Fig.~\ref{fig13}.
	
	\begin{figure}[h]
		\centering 
		\includegraphics[width=4cm]{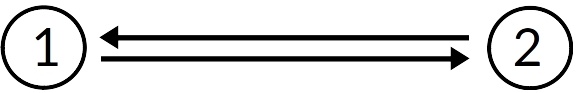} 
		\caption{\small An admissible graph $\CG_4$ for (\ref{eq:vc on R4}) seen as a network of 2 cells. }
		\label{fig13}
	\end{figure}

\subsection{Simple graph versus multigraph} \label{subseq: simple graph versus multigraph}
In this subsection we finish the discussion started in  Section~\ref{sec:introduction} for the example (\ref{eq:3-cell}): we construct the possible network graphs using the procedure of Subsection~\ref{subseq:method} for the vector field on $\Rr^3$ with components 
\begin{eqnarray}
f_1(x_1,x_2,x_3)&=&x_1+x_1^3 \nonumber \\
f_2(x_1,x_2,x_3)&=&x_2+x_2^2x_3 \label{eqca1-3} \\
f_3(x_1,x_2,x_3)&=&x_3+x_1x_2x_3. \nonumber 
\end{eqnarray}

\noindent  \textbf{Simple graph.} Step 1: we have that $\CC = \{ 1, 2, 3 \}$ with valencies $0, 1, 2$ and it makes it consistent to take $1\not\sim_{I} 2 \not\sim_{I} 3 \not\sim_{I} 1$. Step 2: this leads easily to 
$(1,3) \sim_{2} (2,3)$. The input and the edges equivalences are unchanged in step 3. Step 4: this leads easily to $(1,3) \sim_{4} (2,3) \sim_{4}(3,2)$, producing an optimized simple graph; see Fig.\ref{graph1} (left). \\

\noindent \textbf{Multigraph.} The function $\hat{f}_c : {\Rr^4} \to \Rr,$ 
$$\hat{f}_c(x,\overline{y_1,y_2,y_3})= x+ y_1y_2y_3$$
is a generating function of all components $f_c$, $c = 1, 2, 3$, in (\ref{eqca1-3}),  
\begin{eqnarray}
f_1(x_1,x_2,x_3)&=&\hat{f}(x_1,\overline{x_1,x_1,x_1}) \nonumber \\
f_2(x_1,x_2,x_3)&=&\hat{f}(x_2,\overline{x_2,x_2,x_3})  \\
f_3(x_1,x_2,x_3)&=&\hat{f}(x_3,\overline{x_1,x_2,x_3}). \nonumber 
\end{eqnarray} 
Now it is straightforward to see that the resulting admissible graph is the regular multigraph of Fig.\ref{graph1} (right).

\section{Relation among admissible graphs} \label{sec:main}
The central questions raised about the realization of admissible graphs of Section~\ref{sec:realization} regard the number of possible graphs as well as the relation among them, with respect to isomorphism and ODE-equivalence. Here we present the results providing the answers. \\

We start the section with an example of two non-isomorphic but ODE-equivalent admissible graphs. These are the 4-cell network graphs of Subsection~\ref{subsec:vf on R4} obtained in step 4 of the procedure. Let $\CG_4^1$ be the graph  of Fig.~\ref{fig:12} on the left. Its adjacency matrices are
	\begin{eqnarray}A_{\CG_1}^{\rightarrow}=\left[
	\begin{array}{cccc}
	0 & 0 & 1 & 1\\
	1 & 0 & 1 & 1\\
	1 & 1 & 0 & 0\\
	1 & 1 & 1 & 0 
	\end{array} \right], \quad A_{\CG_1}^{\dashrightarrow}=\left[
	\begin{array}{cccc}
	0 & 1 & 0 & 0\\
	0 & 0 & 0 & 0\\
	0 & 0 & 0 & 1\\
	0 & 0 & 0 & 0 
	\end{array} \right]. \nonumber 
	\end{eqnarray} 
Hence,
	\begin{eqnarray}
	\CL(\CG_4^1;\Rr^n)  = \left\{ \left[
	\begin{array}{cccc}
	t & t^{\dashrightarrow} & t^{\rightarrow} & t^{\rightarrow}\\
	s^{\rightarrow} & s & s^{\rightarrow} & s^{\rightarrow}\\
	t^{\rightarrow} & t^{\rightarrow} & t & t^{\dashrightarrow}\\
	s^{\rightarrow} & s^{\rightarrow} & s^{\rightarrow} & s 
	\end{array} \right] \ : \  t,s, t^{\rightarrow},s^{\rightarrow}, t^{\dashrightarrow} \in \Rr \right\}. \nonumber  
	\end{eqnarray}
Now let $\CG_4^2$ be the other graph in Fig.~\ref{fig:12} (right). Its adjacency matrices are
	\begin{eqnarray}A_{\CG_2}^{\rightarrow}=\left[
	\begin{array}{cccc}
	0 & 0 & 1 & 1\\
	0 & 0 & 0 & 0\\
	1 & 1 & 0 & 0\\
	0 & 0 & 0 & 0 
	\end{array} \right], \quad A_{\CG_2}^{\dashrightarrow}=\left[
	\begin{array}{cccc}
	0 & 1 & 0 & 0\\
	1 & 0 & 1 & 1\\
	0 & 0 & 0 & 1\\
	1 & 1 & 1 & 0 
	\end{array} \right]. \nonumber 
	\end{eqnarray} 
Hence,
	\begin{eqnarray}
	\CL(\CG_4^2;\Rr^n)  = \left\{ \left[
	\begin{array}{cccc}
	t & t^{\dashrightarrow} & t^{\rightarrow} & t^{\rightarrow}\\
	s^{\dashrightarrow} & s & s^{\dashrightarrow} & s^{\dashrightarrow}\\
	t^{\rightarrow} & t^{\rightarrow} & t & t^{\dashrightarrow}\\
	s^{\dashrightarrow} & s^{\dashrightarrow} & s^{\dashrightarrow} & s 
	\end{array} \right] \ : \  t,s, t^{\rightarrow}, t^{\dashrightarrow},s^{\dashrightarrow} \in \Rr \right\}. \nonumber  
	\end{eqnarray}
Therefore,  
$$\CL(\CG_4^1;\Rr^n)=\CL(\CG_4^2;\Rr^n),$$
so $\CG_4^1$ and $\CG_4^2$ are ODE-equivalent. But they  are clearly non isomorphic. \\

As we have already seen, step 4 refines the relations of step 3, when the quantity of admissible graphs is attained: 

\begin{prop} \label{propvartheta} The number of possible distinct equivalence relations $\sim_{3}$ (step 3) is given by
	\begin{eqnarray}
	\vartheta(f)=\prod_{Q \in \CC/_{\sim_I}}\left(\prod_{t=1}^v (u_t!)^{(| Q|-1)}\right).  \label{obsstep3}
	\end{eqnarray} 
	\end{prop}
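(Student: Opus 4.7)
The plan is to analyze, for each input-equivalence class $Q \in \CC/{\sim_I}$ independently, how many distinct $\sim_3$ relations can arise from the prescription in step 3, and then to take the product across classes.

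First, I would fix an $I$-class $Q$. By Remark~\ref{rmk: steps}(a), every cell $c \in Q$ has the same invariants $r$ and $s_1, \ldots, s_r$ produced in step 2. I would then argue that the super-set decomposition described in \eqref{eq: sets}--\eqref{eq: sizes of the sets} is also common to all $c \in Q$: indeed, the indices $u_1, \ldots, u_v$ are determined by the generating function $\hat{f}_c$, which, up to the variable $x_c$, coincides with $\hat{f}_{c'}$ for every $c' \in Q$ by \eqref{step3}; hence the set-wise invariances in \eqref{eq: collection of K^u} are intrinsic to $Q$.

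Next, I would fix a reference cell $c_Q \in Q$ together with an (arbitrary) enumeration $K^1(c_Q), \ldots, K^r(c_Q)$ of its blocks. For any other $d \in Q$, the prescription \eqref{step3E} of $\sim_3$ is equivalent to specifying a bijection between the blocks of $I(d)$ and those of $I(c_Q)$ subject to the constraint \eqref{step3}. The maximality of each $K^i$ from step 2 forbids matching a block of $I(d)$ to a block of $I(c_Q)$ lying in a \emph{different} super-set, while the set-wise invariance \eqref{eq: collection of K^u} guarantees that any permutation of blocks \emph{within} a given super-set does satisfy \eqref{step3}. Hence the admissible bijections for $d$ are exactly the elements of ${\bf S}_{u_1} \times \cdots \times {\bf S}_{u_v}$, giving $\prod_{t=1}^v u_t!$ choices per cell $d \neq c_Q$. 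Distinct bijections yield distinct $\sim_3$ (since the reference labeling at $c_Q$ is frozen), and the choices for different $d$'s are clearly independent, so the class $Q$ contributes the factor $\left(\prod_{t=1}^v u_t!\right)^{|Q|-1}$.

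Finally, since $\sim_3$ relates edges only within a single $I$-class (cells in different $I$-classes are never merged by $\sim_3$ by step 3's failure condition), the contributions across distinct classes $Q$ multiply, yielding the stated formula for $\vartheta(f)$. The main obstacle I anticipate is the bookkeeping in the second step: one must verify carefully that permuting blocks across different super-sets really does violate \eqref{step3} (otherwise the super-set decomposition would not be the finest one, contradicting its definition), and that the freezing of $c_Q$'s labeling fully removes the overcounting coming from relabeling super-sets globally, so that the exponent is $|Q|-1$ rather than $|Q|$.
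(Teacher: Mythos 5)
Your proposal is correct and follows essentially the same route as the paper's proof: fix a reference cell $c_0$ in each $I$-class $Q$, observe that for each $d \in Q \setminus \{c_0\}$ equation \eqref{step3} can be satisfied in exactly $\prod_{t=1}^{v} u_t!$ ways (permutations of blocks within each super-set of \eqref{eq: sets}), and multiply over the cells and over the $I$-classes. Your additional justifications --- that maximality of the step-2 blocks forbids matching blocks across different super-sets, and that freezing the reference labeling makes distinct bijections yield distinct relations $\sim_3$ --- are details the paper leaves implicit, but the argument is the same.
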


\begin{proof} {
We use the notation of Remark~\ref{rmk: steps} (b). For each $I$-class $Q$, we have $u_1, \ldots, u_v$ uniquely determined. Based on the possible partitions of (\ref{eq: sets}) which  satisfy (\ref{eq: sizes of the sets}), it follows that,  for each $c \in Q$, we have $v$ collections of edge sets \eqref{eq: collection of K^u} such that the sets in each collection can be permuted in $\hat{f}_c$. Hence, once we choose $c=c_0 \in Q$ as a reference,  for each $ d \in Q-\{c_0\}$ the equation \eqref{step3} has $$\prod_{t=1}^v u_t!$$
ways to be satisfied. }
\end{proof}

From a fixed choice of the associated generating functions, the next three results specify how the resulting admissible graphs are related.

\begin{teo} \label{mainteo}
For a given $C^1$ vector field, the realization procedure of Subsection~\ref{subseq:method} yields admissible graphs with the following relations:
	\begin{itemize} 
		\item [(1)] The network graph $\CG_1=(\CC,\CE, \sim_{\CC}, \sim_1)$ of step 1 and the network graphs $\CG_2=(\CC,\CE, \sim_{\CC}, \sim_2)$ of step 2 are unique up to  an isomorphism;
		
		\item [(2)] Each $\CG_4=(\CC,\CE, \sim_{\CC}, \sim_4)$ of step 4 may be non isomorphic to another network graph obtained in this step, but it is ODE-equivalent to some $\CG_3$ of step 3;
		\item [(3)] The following inclusions hold: $$\CL(\CG_4;\Rr^{n_0}) = \CL(\CG_3;\Rr^{n_0}) \subseteq \CL(\CG_ 2;\Rr^{n_0}) \subseteq \CL(\CG_1;\Rr^{n_0}).$$
	\end{itemize} 
\end{teo}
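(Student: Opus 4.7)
The plan is to dispatch parts (1) and (3) by direct inspection of the constructions in Subsection~\ref{subseq:method}, and then deduce (2) from (3) using the linear criterion (\ref{eq:ode equivalence}) for ODE-equivalence. In particular, all three parts are really statements about the characterization (\ref{LGRn}).

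For (1), I would argue that once the generating functions (\ref{eqcap1-2}) are fixed, the vertex set $\CC$, the relation $\sim_{\CC}$, the input sets $I(c)$, and the multiplicities $m_{cd}$ are determined. Step 1 then places every arrow in its own $\sim_1$-class, so the only freedom is the labeling of parallel arrows, which produces isomorphic graphs. Step 2 constructs the partition of each $I(c)$ from the permutation invariance of $\hat f_c$, and that partition is forced by the maximality of each $K^i$ (equivalently by the minimality of $r$ in (\ref{step2})); consequently $\CG_2$ is again unique up to a relabeling of edge-class names, hence up to isomorphism.

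For (3), I would read off the chain directly from (\ref{LGRn}). The passage from $\CG_1$ to $\CG_2$ merges arrows inside a single $I(c)$, forcing the associated weights $t_i^\xi$ at the fixed head $i=c$ to coincide, so $\CL(\CG_2) \subseteq \CL(\CG_1)$. The passage from $\CG_2$ to $\CG_3$ introduces the input equivalence, imposing $t_i = t_j$ and $t_i^\xi = t_j^\xi$ whenever $i \sim_I j$, which gives $\CL(\CG_3) \subseteq \CL(\CG_2)$. The delicate point, and the main obstacle, is the equality $\CL(\CG_4) = \CL(\CG_3)$: by construction step 4 fuses two classes $\xi, \xi'$ of step 3 only when their head cells lie in distinct $I$-classes, so the nonzero rows of $A^\xi$ and $A^{\xi'}$ are supported on disjoint index sets. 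Hence the constraints $t_i^{\xi''} = t_j^{\xi''}$ in (\ref{LGRn}) never connect an index in the support of $\xi$ to one in the support of $\xi'$, and any weight vector for the fused class $\xi''$ splits uniquely into independent weight vectors for $\xi$ and $\xi'$; conversely any pair of admissible weight vectors assembles into an admissible vector for $\xi''$. The crux is precisely this disjointness verification, without which strict inclusion would appear.

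Finally for (2), fix a $\CG_4$ and let $\CG_3$ be the step-3 graph it refines. Both share the cell set $\CC$ and the identity bijection $\gamma = \mathrm{id}$ is trivially input-preserving; by (3), $\CL(\CG_4;\Rr^{n_0}) = \CL(\CG_3;\Rr^{n_0})$, so the criterion (\ref{eq:ode equivalence}) identifies them as ODE-equivalent. The companion claim that distinct step-4 graphs need not be isomorphic is already exhibited by the pair $\CG_4^1, \CG_4^2$ of Fig.~\ref{fig:12}, computed explicitly just before the theorem statement.
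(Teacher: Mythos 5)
Your proposal is correct and follows essentially the same route as the paper: both arguments work at the level of the adjacency-matrix characterization (\ref{LGRn}), obtain the inclusions in (3) by observing that merging edge classes collapses independent coefficients, and establish the key equality $\CL(\CG_4)=\CL(\CG_3)$ from the fact that step 4 only fuses classes whose nonzero rows are supported on disjoint $I$-classes, so the single coefficient vector for the fused class still decomposes into independent coefficients on each block. Your deduction of (2) from (3) via the identity bijection and the appeal to the pair $\CG_4^1,\CG_4^2$ of Fig.~\ref{fig:12} for non-isomorphism likewise match the paper's treatment.
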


\begin{proof}

We prove by remaking the four steps of the procedure from the adjacency matrix point of view. For simplicity, the proof is carried out for the construction of simple graphs. For multigraphs the proof is completely analogous adapting the notation of an edge which  is not identified with a pair of vertices and so adjacency matrices have integer entries instead of only 0 and 1.

In step 1, for a choice of $\CC$, there is a one-to-one correspondence between the edges in $\CE$ and adjacency matrices: each adjacency matrix $A_1^{i,j}$ corresponds to  $(i,j) \in \CE$, with the $ji$-entry equal to 1 and the others equal to zero. Such matrices are unique up to vertex labeling. 
	
In step 2, for each $c \in \CC$ consider the partition \eqref{step2} of $I(c)$. The adjacency matrices are, for each $r'=1, \ldots, r$, 
$$A_2^{c,r'} = \sum_{d \in K_{r'}} A_1^{d,c},$$ 
so that the graph is unique up to an isomorphism. 
	
In step 3, we construct the $I$-classes of cells. For each $I$-class $Q$, the adjacency matrices are 
$$A_3^{Q,r'} = \sum_{c \in Q} A_2^{c,r'} \quad r'=1, \ldots, r.$$
As already registered in Remark~\ref{rmk: steps} (b), such matrices depend on the ordination of variables in each component; in addition,  {$\vartheta(f)$} is the number of all possible ordinations, by Proposition~\ref{propvartheta}; so distinct choices may lead to isomorphic graphs. 

At this point, it is already straightforward to see that the two inclusions in item (3) hold. So we finally prove the statement in (2).

Without loss of generality, assume that the graph $\CG_3$ has  two $I$-classes $R=\{1, \ldots,m\}, S=\{m+1, \ldots,n_0\}$. Since  $R \cap S = \emptyset,$ then the null rows of $A_3^{R,r'}$ are non null rows in $A_3^{S,s'}$, and vice versa.  The adjacency matrices of a graph $\CG_4$ in step 4 {are} as follows: 
	\begin{eqnarray} 
	&A_3^{R,r''}&, \quad r''=1, \ldots,r  \mbox{ and } r'' \neq r', \nonumber \\ 
	&A_3^{S,s''}&, \quad s''=1, \ldots,s  \mbox{ and } s'' \neq s', \nonumber \\
	&B_4=A_3^{R,r'}+A_3^{S,s'}&. \nonumber 
	\end{eqnarray}
Thus, $M \in \CL(\CG_4,\Rr^{n_0})$ if, and only if,	
\begin{eqnarray}
	M&=&  diag(\stackrel{\tiny{m {\mbox{ times }}}}{\overbrace{t, \ldots,t}},\stackrel{\tiny{n_0-m {\mbox{ times }}}}{\overbrace{\bar{t}, \ldots, \bar{t}}})+\sum_{r''\neq r'} diag(t^{r''}, \ldots, t^{r''},0, \ldots, 0)A_3^{R,r''} + \nonumber \\
	& & +\sum_{s''\neq s'} diag(0, \ldots, 0,t^{s''}, \ldots, t^{s''})A_3^{S,s''} \nonumber \\
	& & + diag(t^{r'}, \ldots, t^{r'},t^{s'}, \ldots, t^{s'})\big( A_3^{R,r''}+A_3^{S,s''} \big) \nonumber \\
	&=&  diag(t, \ldots,t,\bar{t}, \ldots, \bar{t})+\sum_{r''} diag(t^{r''}, \ldots, t^{r''},0, \ldots, 0)A_3^{R,r''} + \nonumber \\
	& & +\sum_{s''} diag(0, \ldots, 0,t^{s''}, \ldots, t^{s''})A_3^{S,s''},
\end{eqnarray}
which is a general element of 
$\CL(\CG_3; \Rr^{n_0}).$ Therefore, $\CL(\CG_4; \Rr^{n_0}) = \CL(\CG_3; \Rr^{n_0}).$ This equality holds for any choice of the pair of indices $r',s'$ as long as the $I$-classes are kept unchanged, so this concludes the proof.  
\end{proof}

In the remaining of this section we discuss how the $\vartheta(f)$ graphs of step 3 are related according to the ODE-equivalence. We assume that the vector field is smooth and that the cells of the resulting network graphs with, say, $n$ cells, have the same dimension. 

Without loss of generality, we assume that each cell phase space is one-dimensional. To ease the exposition, we present the details for the simplest but sufficiently general case: For a smooth vector field  $f \colon \Rr^n \rightarrow \Rr^n$, suppose that step 3 results in two input classes $\{1, \ldots,n_0\}$ and $\{n_0+1, \ldots,n\}$ with associated {generating functions} $g$ and $h$,  
\begin{equation} \label{eq: generating functions}
\begin{array}{rl}
g(y,\overline{y_1,\ldots,y_k},\overline{y_{k+1},\ldots,y_{2k}},\overline{y_{2k+1},\ldots,y_l}) &  =  g(y,\overline{y_{k+1},\ldots,y_{2k}},\overline{y_1,\ldots,y_k},\overline{y_{2k+1},\ldots,y_l}) \\
h(y,\overline{y_1,\ldots,y_{k'}},\overline{y_{k'+1},\ldots,y_{2k'}}) & = h(y,\overline{y_{k'+1},\ldots,y_{2k'}},\overline{y_1,\ldots,y_{k'}}). 
\end{array}
  \end{equation}  
  We have $\vartheta(f)=2^{n_0-1}2^{n-n_0-1}=2^{n-2}$ graphs, and let $\CG$ be one of the graphs. Then $\CG$ has five adjacency matrices $A_{\CG}^i$, $i = 1, \ldots, 5$, three of which (say for $i = 1,2, 3)$  having the last $n - n_0$ null rows  and two ($i=4,5$) having the first $n_0$ null rows. \\

{We now give a technical definition. 
  \begin{defin} \label{defin barG}
With the notation of Remark \ref{rmk: steps}(b), let $\CG$ be  a network graph determined by the partition (\ref{eq: sets}). The network graph $\bar{\CG}$ is given by defining  its $\CE$-classes as the unions
$$K^1\cup \ldots\cup K^{u_1},  \ K^{u_1+1}\cup \ldots\cup K^{u_1+u_2},\ldots, \ K^{u_1+ \ldots+ u_{v-1}+1}\cup \ldots\cup K^{u_1+ \ldots +u_{v}},$$
for each $I$-class $Q$ and each $c \in Q$.
\end{defin}}

For the case (\ref{eq: generating functions}), $\bar{\CG}$ is the graph whose adjacency matrices are 
\[A_{\CG}^1+ A_{\CG}^2, \ A_{\CG}^3, \ A_{\CG}^4+ A_{\CG}^5.\]

\begin{obs}
{\rm The graph $\bar{\CG}$  does not depend on the particular choice of  $\CG$, but only on the associated generating functions.} 
\end{obs}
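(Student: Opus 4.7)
The plan is to verify that the only freedom in going from the generating functions to a step-3 graph $\CG$ --- the labeling permutation used to identify input sets of different cells in the same $I$-class --- is precisely absorbed by the union operation in Definition~\ref{defin barG}.

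I would first observe that the vertex set $\CC$, the vertex equivalence $\sim_{\CC}$, the edge set $\CE$, and the head/tail maps are fixed as soon as the generating functions $\hat{f}_c$ are chosen in step~1, so none of them depend on downstream choices. Moreover, for each cell $c$ the partition $I(c)=K^1(c)\,\dot{\cup}\,\ldots\,\dot{\cup}\,K^{r}(c)$ of step~2, together with its block refinement~(\ref{eq: sets}) and the integers $u_1,\ldots,u_v$, is determined by the setwise permutation invariance structure of $\hat{f}_c$ alone, as described in Remark~\ref{rmk: steps}. Hence the only piece of data that may differ among the $\vartheta(f)$ graphs of step~3 is the edge equivalence $\sim_3$.

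The second step is to describe this ambiguity explicitly. Fix an $I$-class $Q$ and a reference cell $c_0\in Q$. By the proof of Proposition~\ref{propvartheta}, for each $d\in Q$ the relation $\sim_3$ amounts to a choice of permutation $\sigma_d\in S_{u_1}\times\ldots\times S_{u_v}$ satisfying $K^i(c_0)\sim_3 K^{\sigma_d(i)}(d)$, and varying the family $(\sigma_d)_{d\in Q-\{c_0\}}$ over all $I$-classes produces exactly the $\vartheta(f)$ distinct step-3 graphs. For each block index $t=1,\ldots,v$, set $B_t=\{u_1+\ldots+u_{t-1}+1,\ldots,u_1+\ldots+u_t\}$, so that $\sigma_d$ preserves $B_t$ setwise by construction. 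The key observation is then
\[ \bigcup_{i\in B_t} K^{\sigma_d(i)}(d) \;=\; \bigcup_{j\in B_t} K^j(d), \]
so this union does not depend on $\sigma_d$. Consequently the $\bar{\CG}$-edge class indexed by the pair $(Q,t)$ equals $\bigcup_{c\in Q}\bigcup_{i\in B_t} K^i(c)$, an object determined entirely by the data of step~2.

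Combining the two paragraphs, every ingredient of $\bar{\CG}$ --- vertices, edges, head/tail maps, vertex equivalence and edge equivalence --- depends only on the generating functions, which is the claim. I do not foresee a genuine obstacle; the only point that requires care is confirming that the block structure~(\ref{eq: sets}) itself is intrinsic to $\hat{f}_c$, which follows from taking it as the coarsest refinement of $\{K^1,\ldots,K^r\}$ compatible with the setwise symmetries of $\hat{f}_c$.
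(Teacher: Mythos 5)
Your argument is correct; the paper states this remark without proof, and your justification is exactly the intended one: the cells, edges, head/tail maps and the step-2 partitions $\{K^1(c),\ldots,K^r(c)\}$ together with the block sizes $u_1,\ldots,u_v$ are already determined by the generating functions, and the only residual freedom in step 3 --- a block-preserving permutation $\sigma_d$ for each $d$ in an $I$-class --- disappears under the unions of Definition~\ref{defin barG} because $\bigcup_{i\in B_t}K^{\sigma_d(i)}(d)=\bigcup_{j\in B_t}K^j(d)$. No gap; the only point worth polishing is the phrase ``coarsest refinement'': what makes the block structure intrinsic is that the blocks are the orbits of the group of set-permutations of $\{K^1,\ldots,K^r\}$ leaving $\hat{f}_c$ invariant.
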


Let $\gamma \colon \CC \rightarrow \CC$ be a bijection that preserves the input classes of $\CG$. The graph $\gamma \CG$, whose adjacency matrices are $\gamma A_{\CG}^i\gamma^{-1},$ for $i = 1, \ldots, 5$, is ODE-equivalent  to $\CG$. However, it is not necessarily true that $\gamma \CG$ is also an admissible graph for this vector field. So under what conditions over $\gamma$ is this an admissible graph?  The following proposition gives a necessary condition:

\begin{prop} \label{prop: G bar}
For generic generating functions  and for $\CG$ an admissible  graph, if $\gamma \CG$ is an admissible graph, then there exists $\gamma'=(\gamma_{\CC},\gamma_{\CE}) \in Aut(\bar{\CG})$ such that $\gamma=\gamma_{\CC}$.
\end{prop}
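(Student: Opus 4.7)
The plan is to reduce the statement to a matrix identity $\gamma \bar{A}^{\bar\xi}_{\bar{\CG}} \gamma^{-1} = \bar{A}^{\bar\xi}_{\bar{\CG}}$ for every edge-type $\bar\xi$ of $\bar{\CG}$, and then to build the edge-bijection $\gamma_{\CE}$ out of that identity. I will argue in three moves: first, that $\bar{\gamma\CG}=\bar{\CG}$ as labeled network graphs; second, that this forces conjugation by $\gamma$ to fix each adjacency matrix of $\bar{\CG}$ individually; third, that the combinatorial content of the matrix identity produces the required $\gamma_{\CE}$.

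For the first move, the hypothesis that $\gamma\CG$ is admissible places it among the graphs produced by the stepwise procedure of Subsection~\ref{subseq:method} with the same cell structure and the same generating functions $g,h$ as $\CG$ (this is where the ``generic generating functions'' hypothesis enters: it rules out admissibility of $\gamma\CG$ due to hidden symmetries of $g,h$ not accounted for by the merged partition \eqref{eq: sets}). By the remark immediately preceding the proposition, $\bar{\CG}$ depends only on the generating functions; applied to both $\CG$ and $\gamma\CG$, this gives $\bar{\gamma\CG}=\bar{\CG}$.

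For the second move, the adjacency matrices of $\gamma\CG$ are $\gamma A^i_{\CG}\gamma^{-1}$ by construction, and because $\gamma$ preserves the input classes the merging pattern of Definition~\ref{defin barG} used to form $\bar{\gamma\CG}$ is the same as that used to form $\bar{\CG}$. Hence each adjacency matrix of $\bar{\gamma\CG}$ equals $\gamma \bar{A}^{\bar\xi}_{\bar{\CG}}\gamma^{-1}$ for the corresponding $\bar\xi$. Combining with the first move yields the matrix identities $\gamma \bar{A}^{\bar\xi}_{\bar{\CG}}\gamma^{-1}=\bar{A}^{\bar\xi}_{\bar{\CG}}$. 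The subtlety here is that a priori $\gamma$-conjugation need only permute the set $\{\bar{A}^{\bar\xi}_{\bar{\CG}}\}_{\bar\xi}$; ruling out a nontrivial permutation is exactly what genericity guarantees, since swapping two merged classes of $\bar{\CG}$ would amount to an unlisted slot-permutation symmetry of $g$ or $h$.

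The third move is then routine: the matrix identity means that for every ordered pair $(c,d)$ and every merged type $\bar\xi$, the number of $\bar\xi$-edges from $c$ to $d$ equals that from $\gamma(c)$ to $\gamma(d)$, so one may pick a fiber-by-fiber type-preserving bijection to define $\gamma_{\CE}\colon\CE\to\CE$, and the pair $(\gamma,\gamma_{\CE})$ then satisfies the axioms of an automorphism of $\bar{\CG}$. The main obstacle I anticipate is cleanly formalizing the genericity hypothesis used in the first two moves — specifically, verifying that for generic $g,h$ the partition \eqref{eq: sets} of the edge classes into mergeable groups gives the \emph{only} slot-permutation invariance available, so that neither the stepwise procedure nor conjugation by $\gamma$ can interchange distinct merged classes of $\bar{\CG}$.
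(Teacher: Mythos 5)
Your overall target is the right one --- the proof does come down to the matrix identities $\gamma(A_{\CG}^1+A_{\CG}^2)\gamma^{-1}=A_{\CG}^1+A_{\CG}^2$, etc., for the merged adjacency matrices of $\bar{\CG}$, after which extracting $\gamma_{\CE}$ is routine. But there is a genuine gap in how you get to those identities. Your first move asserts that admissibility of $\gamma\CG$ ``places it among the graphs produced by the stepwise procedure with the same generating functions,'' and then invokes the remark that $\bar{\CG}$ depends only on the generating functions to conclude $\overline{\gamma\CG}=\bar{\CG}$. That remark, however, is a statement about the $\vartheta(f)$ graphs actually constructed in step 3; applying it to $\gamma\CG$ presupposes that $\gamma\CG$ is one of them, which is not part of the hypothesis and is essentially the content of the proposition itself. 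Since $\overline{\gamma\CG}=\bar{\CG}$ as labeled graphs is literally the statement $\gamma\in Aut(\bar{\CG})$ at the cell level, your moves 1 and 2 together restate the conclusion rather than derive it from Definition~\ref{definAVF}. Your appeal to genericity (``hidden symmetries of $g,h$ are ruled out'') names the obstruction but does not supply an argument, as you yourself flag at the end.

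The paper closes exactly this gap by a short analytic computation: evaluate $Df$ at a diagonal point $\nu=(a,\dots,a)$. Admissibility of $f$ for $\CG$ expresses $Df(\nu)$ as $\alpha+\tfrac{\partial g}{\partial y_1}(\nu)(A_{\CG}^1+A_{\CG}^2)+\tfrac{\partial g}{\partial y_{2k+1}}(\nu)A_{\CG}^3+\tfrac{\partial h}{\partial y_1}(\nu)(A_{\CG}^4+A_{\CG}^5)$, where the coefficients of $A^1$ and $A^2$ coincide precisely because of the slot-permutation symmetry of $g$ at diagonal arguments; admissibility for $\gamma\CG$ gives the same expression with conjugated matrices. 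Comparing the two (separating by the null-row blocks of the two input classes) and imposing the explicit genericity condition \eqref{eq: generic condition f} --- linear independence of the coefficient vectors as $a$ varies, plus $\tfrac{\partial h}{\partial y_1}(\nu_3)\neq 0$ --- isolates each merged matrix and yields the identities. I recommend you replace your first two moves with this Jacobian argument: it uses the admissibility hypothesis concretely, and it converts your vague genericity requirement into a checkable nondegeneracy condition on finitely many partial derivatives of $g$ and $h$ along the diagonal. Your third move can then stand as written.
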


\begin{proof}
To ease exposition, we present the proof for the case in (\ref{eq: generating functions}). For any $a \in \Rr$, we take  $\nu=(a,\ldots,a) \in \Rr^n$. Let us also denote by $\nu$ the vectors in $\Rr^{l+1}$ and $\Rr^{2k'+1}$ with all entries equal to $a$. By the permutation symmetries of $g,h$, we have that 
$$\frac{\partial g}{\partial y_1}(\nu)= \frac{\partial g}{\partial y_{k+1}}(\nu), \quad \frac{\partial h}{\partial y_1}(\nu)= \frac{\partial h}{\partial y_{k'+1}}(\nu).$$
Since $f$ is $\CG$-admissible, these imply that 
\begin{eqnarray} Df(\nu)= \alpha + \frac{\partial g}{\partial y_1}(\nu)(A_{\CG}^1+A_{\CG}^2)+\frac{\partial g}{\partial y_{2k+1}}(\nu)A_{\CG}^3+  \frac{\partial h}{\partial y_1}(\nu)(A_{\CG}^4+A_{\CG}^5), \nonumber
\end{eqnarray}
where $\alpha$ is a diagonal matrix whose first $n_0$ entries are equal to $\dfrac{\partial g}{\partial y}(\nu)$ and the last $n-n_0$ entries are equal to $\dfrac{\partial h}{\partial y}(\nu)$. 
Similarly, as $f$ is $\gamma \CG$-admissible, then 
\begin{eqnarray} Df(\nu) = \alpha + \frac{\partial g}{\partial y_1}(\nu)\gamma(A_{\CG}^1+A_{\CG}^2)\gamma^{-1} +\frac{\partial g}{\partial y_{2k+1}}(\nu)\gamma A_{\CG}^3\gamma^{-1} +  \frac{\partial h}{\partial y_1}(\nu)\gamma(A_{\CG}^4+A_{\CG}^5)\gamma^{-1}. \nonumber
 \end{eqnarray}
Comparing the null rows, the following equalities hold:  
\begin{eqnarray}
\frac{\partial g}{\partial y_1}(\nu)(A_{\CG}^1+A_{\CG}^2)+\frac{\partial g}{\partial y_{2k+1}}(\nu)A_{\CG}^3 &=& \frac{\partial g}{\partial y_1}(\nu)\gamma(A_{\CG}^1+A_{\CG}^2)\gamma^{-1} +\frac{\partial g}{\partial y_{2k+1}}(\nu)\gamma A_{\CG}^3\gamma^{-1} \nonumber  \\  
\frac{\partial h}{\partial y_1}(\nu)(A_{\CG}^4+A_{\CG}^5) &=& \frac{\partial h}{\partial y_1}(\nu)\gamma(A_{\CG}^4+A_{\CG}^5)\gamma^{-1} \nonumber 
\end{eqnarray}
If there exist  $a_1,a_2,a_3$ such that 
\begin{eqnarray} det \left[ \begin{array}{cc} \frac{\partial g}{\partial y_1}(\nu_1) & \frac{\partial g}{\partial y_{2k+1}}(\nu_1) \\ \frac{\partial g}{\partial y_1}(\nu_2) & \frac{\partial g}{\partial y_{2k+1}}(\nu_2)    \end{array} \right] \neq 0,  \quad  \frac{\partial h}{\partial y_1}(\nu_3) \neq 0, \label{eq: generic condition f} \end{eqnarray}
(this is the generic condition on $g, h$), then  
\begin{eqnarray} A_{\CG}^1+A_{\CG}^2=\gamma(A_{\CG}^1+A_{\CG}^2)\gamma^{-1}, \quad A_{\CG}^3=\gamma A_{\CG}^3\gamma^{-1}, \quad A_{\CG}^4+A_{\CG}^5=\gamma(A_{\CG}^4+A_{\CG}^5)\gamma^{-1}.\nonumber \end{eqnarray}
But this means that $\gamma$ is a bijection on the set of cells of $\CG$ by an automorphism of $\bar{\CG}$.
\end{proof}

\begin{teo} \label{teomain2} Let $\gamma'=(\gamma_{\CC},\gamma_{\CE}) \in Aut(\bar{\CG})$. \begin{itemize}
\item [(1)]  The vector field $f$ is $\CG$-admissible and $\gamma_{\CC}\CG$-admissible if, and only if,  the bijection $\gamma_{\CE}$ induces a bijection on the input sets (step 2) preserving them as partitions, that is, 
\begin{eqnarray} \label{eq:partitions}
\gamma_{\CE}(I(c))=I(\gamma_{\CC}(c)), \mbox{ as partitions. } \label{eq002} 
\end{eqnarray}
\item [(2)] The condition  (\ref{eq:partitions}) on $\gamma'$ implies that $f \gamma_{\CC} = \gamma_{\CC} f$. The converse holds for the simple graph case. 
\end{itemize}
\end{teo}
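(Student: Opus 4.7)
My plan is to translate both admissibility conditions and the commutation relation into statements about the step~2 input partitions, and then exploit their uniqueness from Remark~\ref{rmk: steps}(a).

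For part~(1), forward direction, I assume $f$ is simultaneously $\CG$-admissible and $\gamma_{\CC}\CG$-admissible and fix a cell $c$, focusing on the single component $f_{\gamma_{\CC}(c)}$. $\CG$-admissibility forces its generating function to be invariant exactly along the step~2 partition $I(\gamma_{\CC}(c))=K^1\dot{\cup}\cdots\dot{\cup}K^r$. Since $\gamma_{\CC}\CG$ has adjacency matrices $\gamma_{\CC} A_{\CG}^i \gamma_{\CC}^{-1}$, its input partition at $\gamma_{\CC}(c)$ has the same tail set as $\gamma_{\CE}(I(c))$ and its class structure is that of the $\CG$-classes of $I(c)$ pushed forward by the $\bar{\CG}$-automorphism, so $\gamma_{\CC}\CG$-admissibility forces $f_{\gamma_{\CC}(c)}$ to be invariant along $\gamma_{\CE}(I(c))$ as well. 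By the uniqueness of the step~2 partition (Remark~\ref{rmk: steps}(a)), these two partitions of $I(\gamma_{\CC}(c))$ must agree, which is precisely (\ref{eq:partitions}). The converse is immediate: once (\ref{eq:partitions}) holds, the admissibility constraints imposed by the two graphs coincide at each cell, and the $I$-equivalences in $\gamma_{\CC}\CG$ are matched to those of $\CG$ via $\gamma_{\CC}$, so $\CG$-admissibility of $f$ transfers verbatim to $\gamma_{\CC}\CG$-admissibility.

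For part~(2), forward implication, I would unpack the desired equality $f_{\gamma_{\CC}(d)}(\gamma_{\CC}(x)) = f_d(x)$ one cell at a time. Condition (\ref{eq:partitions}) says that $\gamma_{\CE}|_{I(d)}\colon I(d)\to I(\gamma_{\CC}(d))$ is an arrow-type-preserving bijection with respect to the $\CG$-edge classes, so $d\sim_I \gamma_{\CC}(d)$ in $\CG$ and step~3 of the procedure gives $\hat{f}_d=\hat{f}_{\gamma_{\CC}(d)}$. The automorphism identity $\CT(\gamma_{\CE}(e'))=\gamma_{\CC}(\CT(e'))$ then ensures that the variable substitution lines up: setting $e=\gamma_{\CE}(e')$, the entry $x_{\gamma_{\CC}^{-1}(\CT(e))}$ of $\gamma_{\CC}(x)$ equals $x_{\CT(e')}$, so the two evaluations of $\hat{f}_d$ coincide and the commutation relation holds.

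The converse of part~(2), restricted to the simple graph case, is where I expect the real work. In a simple graph each arrow is determined by its pair of endpoints, so $\gamma_{\CE}$ is forced by $\gamma_{\CC}$ and no ambiguity among parallel arrows is possible. Starting from $f\gamma_{\CC}=\gamma_{\CC} f$, I would read off that $f_d$ and $f_{\gamma_{\CC}(d)}$ agree after the substitution $x_{d'}\mapsto x_{\gamma_{\CC}^{-1}(d')}$, so their permutation invariances correspond under $\gamma_{\CC}$; applying Remark~\ref{rmk: steps}(a) once more yields that the step~2 partitions of $I(d)$ and $I(\gamma_{\CC}(d))$ are conjugate by $\gamma_{\CC}$, which in the simple graph setting is exactly (\ref{eq:partitions}). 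The main obstacle is precisely this last step: in the multigraph case $\gamma_{\CE}$ may permute parallel arrows lying in the same $\bar{\CG}$-class but different $\CG$-classes, so the passage from the symmetry of $f$ back to (\ref{eq:partitions}) genuinely breaks, which is why the simple graph hypothesis is doing real work.
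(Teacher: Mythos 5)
Your proposal is correct and follows essentially the same route as the paper: part (1) via the uniqueness/maximality of the step~2 partitions (the paper argues the ``only if'' in contrapositive form, yours is direct, but the content is identical), part (2) forward by rewriting both sides through the shared generating function and the tail-compatibility of the automorphism, and the converse for simple graphs by noting that $\gamma_{\CE}$ is determined by $\gamma_{\CC}$ so that maximality of the parts forces \eqref{eq:partitions}. Your closing remark on why the multigraph case fails (parallel arrows in one $\bar{\CG}$-class but different $\CG$-classes) is a correct and slightly more explicit account of the role of the simple-graph hypothesis than the paper gives.
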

\begin{proof}
(1) If \eqref{eq002} holds, then it is direct from Definition~\ref{definAVF} that $f$ is $\gamma_{\CC}\CG$-admissible, since $\gamma_{\CC}$ preserves input sets. If \eqref{eq002} does not hold, then there exists $c \in \CC$ such that $\gamma_{\CE}(I(c))\neq I(\gamma_{\CC}(c))$;  by uniqueness of the partition of $I(\gamma_{\CC}(c)),$ it follows that $f$ is not $\gamma_{\CC}\CG$-admissible.  

(2) For $c \in \{1, \ldots, n_0\}$, we have that
$$
(f \gamma_{\CC})_c(x_1, \ldots,x_n)= f_c(x_{\gamma_{\CC}^{-1}(1)}, \ldots, x_{\gamma_{\CC}^{-1}(n)})=g(x_{\gamma_{\CC}^{-1}(c)}, x_{\gamma_{\CE}^{-1}(I(c))}) $$
and
$$ (\gamma_{\CC}f )_c(x_1, \ldots,x_n)=f_{\gamma_{\CC}^{-1}(c)}(x_1, \ldots,x_n)=g(x_{\gamma_{\CC}^{-1}(c)}, x_{I(\gamma_{\CC}^{-1}(c))}), $$
so the equality $f \gamma_{\CC} = \gamma_{\CC} f$ follows from \eqref{eq002}. For the converse, suppose that  $f \gamma_{\CC} = \gamma_{\CC} f$, so 
$$g(x_{\gamma_{\CC}^{-1}(c)}, x_{\gamma_{\CE}^{-1}(I(c))})= g(x_{\gamma_{\CC}^{-1}(c)}, x_{I(\gamma_{\CC}^{-1}(c))}).$$ 
But \eqref{eq002} to fail would contradict the maximality of the parts of $I(\gamma_{\CC}^{-1}(c))$ for simple graphs.   
\end{proof} 
 
\begin{coro} \label{cor: Iso}
    Let $\CG$ be an admissible simple graph of step 3 for $f$. Any admissible graph for $f$ and nonisomorphic to $\CG$ is ODE-equivalent to $\CG$ if, and only if, it is of the form $\sigma \CG$, where $\sigma$  belongs to  
$$\Sigma (\CG) \ = \ \{ \gamma \in Aut(\bar{\CG}) \tq \gamma f = f \gamma \} / Iso(\CG).$$

\end{coro}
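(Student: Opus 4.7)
The plan is to combine Proposition~\ref{prop: G bar}, Theorem~\ref{teomain2}, and the adjacency-matrix characterization \eqref{eq:ode equivalence} of ODE-equivalence to set up a bijective correspondence between admissible graphs that are ODE-equivalent but non-isomorphic to $\CG$ and the coset set $\Sigma(\CG)$. First, the well-definedness of $\sigma\CG$ modulo $Iso(\CG)$ is immediate: if $\tau\in Iso(\CG)$ then $\tau A_{\CG}^{\xi}\tau^{-1}=A_{\CG}^{\xi}$ for every edge class, so $\tau\CG=\CG$ as a labeled graph, and hence $(\sigma\tau)\CG=\sigma\CG$.

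For the reverse direction, I would start with a representative $\sigma\in Aut(\bar{\CG})$ satisfying $\sigma f=f\sigma$. Since $\CG$ is simple, the converse half of Theorem~\ref{teomain2}(2) turns $\sigma f=f\sigma$ into the partition-preservation condition \eqref{eq:partitions}, and then Theorem~\ref{teomain2}(1) yields that $\sigma\CG$ is admissible for $f$. To verify ODE-equivalence, the adjacency matrices of $\sigma\CG$ are $\sigma A_{\CG}^{\xi}\sigma^{-1}$, so $\CL(\sigma\CG;\Rr^{n_0})=\sigma\CL(\CG;\Rr^{n_0})\sigma^{-1}$; because $\sigma$ comes from an automorphism of $\bar{\CG}$ and the $I$-classes of $\CG$ coincide with those of $\bar{\CG}$, the permutation $\sigma$ is input-preserving and realizes \eqref{eq:ode equivalence}. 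For the forward direction, assume $\CG'$ is admissible, non-isomorphic to $\CG$, and ODE-equivalent to $\CG$. The characterization \eqref{eq:ode equivalence} supplies an input-preserving bijection $\gamma$ with $\CL(\CG;\Rr^{n_0})=\gamma^T\CL(\CG';\Rr^{n_0})\gamma$; conjugating by $\gamma$ and using $\gamma^{-1}=\gamma^T$ gives $\CL(\gamma\CG;\Rr^{n_0})=\CL(\CG';\Rr^{n_0})$. I will argue that simplicity forces $\gamma\CG=\CG'$ as labeled graphs; granted this, $\gamma\CG$ is admissible, Proposition~\ref{prop: G bar} identifies $\gamma$ as the vertex part of some element of $Aut(\bar{\CG})$, and Theorem~\ref{teomain2}(2) yields $\gamma f=f\gamma$. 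Finally, non-isomorphism with $\CG$ forces $\gamma\notin Iso(\CG)$, so $\gamma$ represents a nontrivial coset in $\Sigma(\CG)$.

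The hard part will be the labeled recovery $\gamma\CG=\CG'$ from the identity $\CL(\gamma\CG;\Rr^{n_0})=\CL(\CG';\Rr^{n_0})$. For simple graphs the adjacency matrices have $\{0,1\}$-entries whose supports are pairwise disjoint across edge-types and, as already noticed in the proof of Proposition~\ref{prop:homog simple graphs}, they are linearly independent inside $\CL$; a generic element of $\CL$ will therefore exhibit each $A_{\CG}^{\xi}$ through the set of off-diagonal positions carrying a common coefficient, and reading off these positions recovers each matrix uniquely. This is the only non-formal step in the plan: once it is secured, everything else reduces to applying Proposition~\ref{prop: G bar} and Theorem~\ref{teomain2} in sequence.
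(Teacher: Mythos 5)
Your proposal is correct and follows essentially the same route as the paper, which simply declares the corollary ``direct from Theorem~\ref{teomain2}(2)'': you assemble exactly the supporting results the paper develops for this purpose (Proposition~\ref{prop: G bar} for the necessary condition $\gamma\in Aut(\bar{\CG})$, Theorem~\ref{teomain2} to translate admissibility of $\gamma\CG$ into $\gamma f=f\gamma$, and the disjoint-support/linear-independence argument already used in Proposition~\ref{prop:homog simple graphs} to recover $\gamma\CG=\CG'$ from $\CL(\gamma\CG;\Rr^{n_0})=\CL(\CG';\Rr^{n_0})$). Your version is in fact more explicit than the paper's about the one nontrivial step (the labeled recovery for simple graphs) and about the implicit genericity hypothesis inherited from Proposition~\ref{prop: G bar}.
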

\begin{proof}
This is direct from Theorem~\ref{teomain2} (2).
\end{proof}
The example of the next subsection illustrates that  distinct choices of step 3 can lead to isomorphic graphs or also to non ODE-equivalent graphs. 

\subsection{Example: a vector field on $\Rr^6$} \label{sec: vf on R6}

The aim of this subsection is to show with an example  that a vector field may admit invariant polydiagonal subspaces  that are not realized as a synchrony pattern of an admissible graph of this vector field. Nevertheless, for this particular example of six cells we shall verify that the polydiagonal invariant subspaces are generically realized as synchronies of the network graph $\bar{\CG}$ presented above. \\

We construct admissible simple graphs for vector fields $f: {\Rr}^6 \to {\Rr}^6$  that govern systems of the form
\begin{equation} \label{eq:vf on R6}
\begin{array}{ll}
\dot{x_1} = & g(x_1, x_5, x_6, x_2, x_3) \\
\dot{x_2} = & g(x_2, x_6, x_1, x_3, x_4) \\
\dot{x_3} = & g(x_3, x_1, x_2, x_4, x_5) \\
\dot{x_4} = & g(x_4, x_2, x_3, x_5, x_6) \\
\dot{x_5} = & g(x_5, x_3, x_4, x_6, x_1) \\
\dot{x_6} = & g(x_6, x_4, x_5, x_1, x_2), 
\end{array} 
\end{equation}
for $g : \Rr^5 \to \Rr$ such that
\begin{eqnarray} g(y, \overline{y_1, y_2}, \overline{y_3, y_4}) =
g(y, \overline{y_3, y_4}, \overline{y_1, y_2}). \label{eq-9.4}
\end{eqnarray} 
Notice that $\bar{\CG}$ in this case is the graph $G_6$ of six cells with nearest and next nearest neighbor coupling. {The synchrony subspaces of $G_6$ are given in Table~1.}

Clearly, an optimized admissible simple  graph for $f$ with six cells is homogeneous with two types of edges. From \eqref{eq-9.4}, there are $\vartheta(f)=2^{6-1}=32$ ways to define the edge classes. By investigation, it is easy to see that, up to isomorphism, there are eight types of admissible simple graphs. By Proposition~\ref{prop:homog simple graphs}, these are all non ODE-equivalent. In Table~2 we present the eight types of admissible graphs and, for each type representative $\CG$ listed in the first column, we give the possible synchrony subspaces. In addition, in the last column we present the number $| \Sigma(\CG)|$ of the ODE-equivalence class (see Corollary~\ref{cor: Iso}). Notice that these numbers sum up to give $\vartheta(f)$.

We finally discuss the data presented in Table~2 with respect to the invariant polydiagonal subspaces under $f$. On one hand,  
each number in the second column corresponds to a subspace that is obviously invariant under $f$, by the admissibility of the graphs. On the other hand, let $\Theta$ be a polydiagonal subspace which is invariant under $f$,
\begin{equation} \label{eq: Theta}
f(\Theta) \subseteq \Theta.
\end{equation}
{We proof that generically this} is a synchrony subspace for some graph in Table~2. Let $A^1$ and $A^2$ be the adjacency matrices of $\CG$. Since  $\Theta$ is polydiagonal, it contains the diagonal. So let 
$\nu \in \Theta$ be in the diagonal. 
By \eqref{eq-9.4}, we have that $\dfrac{\partial g}{\partial y_1}(\nu)=\dfrac{\partial g}{\partial y_3}(\nu)$, and then
\begin{eqnarray} {Df}(\nu)(\Theta)&=&\frac{\partial g}{\partial y}(\nu) \Theta+\frac{\partial g}{\partial y_1}(\nu)A^1(\Theta)+\frac{\partial g}{\partial y_3}(\nu)A^2(\Theta) \nonumber \\ &=&\frac{\partial g}{\partial y}(\nu) \Theta+\frac{\partial g}{\partial y_1}(\nu)(A^1+A^2)(\Theta). \nonumber 
\end{eqnarray}
If $\nu$ satisfies the  generic  condition $\dfrac{\partial g}{\partial y_1}(\nu) \neq 0$, then it follows directly from 
(\ref{eq: Theta}) that the inclusion $(A^1+A^2)(\Theta) \subseteq \Theta$ also holds. We now notice that $A^1+A^2$ is the adjacency matrix of the network graph $G_6$. Therefore, 
$\Theta$ is generically {induced by} a synchrony pattern of $G_6$. {Now, $G_6$ is not an admissible graph for \eqref{eq:vf on R6}, but each of its synchrony patterns (Table~1) falls into the second column of  Table~2 for some (maybe more than one) graph up to the action of ${\rm Aut}(G_6)$.} \\

{For the network of  the next section, the admissible graph is $G_6$ and we also discuss about invariant subspaces. We use a similar approach as above, but  with an extra linear algebra property provided for that particular case.} 

\section{Coupled network of van der Pol identical oscillators} \label{sec:van der Pol}
In this section we present the possible synchronous configurations in a specific network of identical oscillators. In particular, we find the hybrid states of chimera, verifying that spatially separated domains of synchronized and desynchronized behavior can arise  in networks of identical units with symmetric coupling topologies.  

We consider a network of six second-order systems of van der Pol type identical oscillators which are coupled in a non-local fashion with additional intensity-dependent frequency (\cite{CHANDRASEKAR}):
\begin{equation} 
\begin{array}{lll} \label{eq:2nd order van der Pol}
\ddot{x}_i &=& b(1-x^2_i)\dot{x}_i-(\omega_0^2+\alpha_1x^2_i+\alpha_2x^4_i)x_i  +  \epsilon(\dfrac{1}{4}\displaystyle{(\sum_{j=i-2}^{j=i+2}\dot{x}_j)-\dot{x}_i)}+  \eta(\dfrac{1}{4}\displaystyle{(\sum_{j=i-2}^{j=i+2}{x}_j)-{x}_i}{)},
\end{array}
\end{equation}
for $i=1, \ldots, 6$, where $\alpha_1$ and $\alpha_2$ are the so-called intensity parameters, $\epsilon$ and $\eta$ are the coupling strengths.  

A network of several systems of van der Pol type identical oscillators  was numerically studied in \cite{CHANDRASEKAR} for 500 oscillators. That work shows that from a random initial condition the system can  evolve to a situation where part of the oscillators are in synchrony and  the other part are in total incoherence of phases.

In 1665, Christiaan Huygens observed that two  pendulum clocks always synchronized after a certain time. The justification was that, for been stuck in the same wood, part of the momentum of one pendulum traveled as vibrations in the wood to the other pendulum. Since then, it was thought that a set of coupled oscillators either remained in total disorder or, after a certain time, got in synchrony. However, as first observed by Kuramoto and Battogtokh \cite{KB}, for the coupling of identical oscillators there can be an intermediate state between total synchrony (coherence) and total desynchrony (incoherence). In this regime,  part of the oscillators are synchronized and the other part are in total incoherence of phases. This phenomenon, known as `chimera state' after Abrams and Strogatz \cite{AS}, has since attracted attention and enormous interest in many fields of applications. Experiments and numerical simulations have given evidences that this behaviour is not only possible but also expected to be stable as the system evolves.

We investigate synchronous states of (\ref{eq:2nd order van der Pol}), namely
\begin{equation} \label{eq:xi=xj}
x_i(t)=x_j(t), \forall t \in \Rr,
\end{equation}
for $i,j$ in some subset of $\{1, \ldots, 6\}$. 

Notice that, in particular,  $\dot{x}_i(t)=\dot{x}_j(t)$. Hence, if we rewrite  (\ref{eq:2nd order van der Pol}) as the coupled Hamiltonian system 
\begin{equation} \label{eq:vanderpol1}
\begin{array}{lll}
\dot{x}_i &=&  y_i \\ 
\dot{y}_i &=& b(1-x^2_i)y_i-(\omega_0^2+\alpha_1x^2_i+\alpha_2x^4_i)x_i \\
& & + \ \epsilon (\dfrac{1}{4}(\displaystyle{\sum_{j=i-2}^{j=i+2}y_j)-y_i)}+ \eta(\dfrac{1}{4}(\displaystyle{\sum_{j=i-2}^{j=i+2}{x}_j)-{x}_i)}, 
\end{array}
\end{equation}
for $i=1, \ldots,6,$ then the synchronies of one are in one-to-one correspondence with the synchronies of the other. 

This is a vector field admissible for the regular network graph $G_6$ with six cells on $({\mathbb R}^2)^6$ with nearest and next nearest neighbor identical coupling. Including the trivial totally synchronous pattern, there are nine distinct patterns of synchrony, which have been computed from our code {(see Remark \ref{obs code})}. In Table~1 we present all the possible eight nontrivial synchronies. In this particular case, each synchrony subspace is the fixed-point subspace of a subgroup of the automorphism group Aut($G_6$) of the network graph, which is the octahedral
group  ${\bf O} \simeq \bigl<{\bf D}_6, (14), (25)\bigr>$  (see \cite[Lemma 2.1]{GNS}, where the authors present the automorphism groups Aut($G_N$),  $N \geq 5$). See also our comment in the end of Subsection~\ref{subsec:ber}. 

Regarding the data of Table~1,  contrarily to what is usual in the literature, we give all the possible algebraic expressions (second column) of the corresponding {diagram} (first column), which are obviously fixed-point subspaces of conjugate isotropies. Doing so,  we link the data of Tables~1 and 2: expressions (1) to (3) of $\sharp 1$ in Table~1  appear in Table~2 $\sharp 6$  but  not in $\sharp 2$; similarly, the expressions (19) to (22) of $\sharp 6$ in Table~1  appear in $\sharp 6$  but  only (21) appear in $\sharp 6$ of Table~2, and so on.  \\

Chimera states can now be selected directy from Table~1, namely configurations $\sharp 1$ to $\sharp 4$. For example, in case $\sharp 2$ the cells 3 and 6 are two isolated desynchronized cells. From the point of view of the applications,  the detection of the possible robust attracting chimeras from this list is a relevant issue,  for these correspond to the numerically observed phenomena of the literature mentioned in our introductory section. At the present, this investigation has been carried out. \\

By the condition in \eqref{eq: condition to balanced}, all synchrony subspaces are polydiagonal invariant  subspaces  under the vector field $f$ defined by  \eqref{eq:vanderpol1}.
We finish by investigating the converse. Let $\Theta$ be a polydiagonal subspace which is invariant under the vector field $f$ given in \eqref{eq:vanderpol1}.  

In particular, it is invariant under the linearization at the origin,
\begin{eqnarray} 
{Df}(0)=I_6 \otimes \left[ \begin{array}{cc} 0 & 1 \\ -\omega_0^2 -\dfrac{3\eta}{4} & b  -\dfrac{3\epsilon}{4}\end{array} \right] +A_{G_6} \otimes \left[ \begin{array}{cc} 0 & 0 \\ \dfrac{\eta}{4} & \dfrac{\epsilon}{4}\end{array} \right], \nonumber 
\end{eqnarray} which implies that
$$A_{G_6} \otimes \left[ \begin{array}{cc} 0 & 0 \\ \dfrac{\eta}{4} & \dfrac{\epsilon}{4}\end{array} \right][\Theta] \subset \Theta.$$
Now, since any vector $v \in \Theta$ is of the form $v=(x_1,y_1, \ldots,x_6,y_6)$ with  $x_i=x_j$ and $y_i=y_j$ for $i,j$ in some subset of $\{1, \ldots, 6\}$, and that 
\begin{eqnarray}
A_{G_6} \otimes \left[ \begin{array}{cc} 0 & 0 \\ \dfrac{\eta}{4} & \dfrac{\epsilon}{4}\end{array} \right][v] =  \frac{1}{4}A_{G_6} \otimes I_2[(0,\eta x_1+ \epsilon y_1, \ldots,0,\eta x_6+ \epsilon y_6)] \in \Theta, 
\end{eqnarray}
it follows that $\Theta'$ is an $A_{G_6}$-invariant subspace of $\Rr^6$, in the $x_i$'s variables,  defined by the same equalities as for $\Theta$.  \\

\noindent {\bf Acknowledgments.}  TAA acknowledges financial support by FAPESP grant number 2019/2130-0. MM acknowledges financial support by FAPESP grant 2019/21181-0.

\begin{table}[H]
\centering { \small{ 
\begin{tabular}{ |m{0.3cm}|m{3.5cm}|m{6cm}|m{5cm}|  } 
 \hline 
 
 \quad $\sharp$ & \quad \quad  \quad {Diagram} & \quad  \quad Algebraic expression & Symmetry of the pattern under the standard representation of the octahedral group \\
 \hline
 $1$ & \includegraphics[scale=0.25]{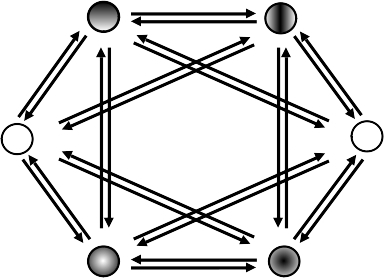}   & $(1) \; \; \{x_1=x_4\} $  \newline $(2) \; \;\{x_2=x_5\}$ \newline    $(3) \; \;\{x_3=x_6\}$     & Reflection w.r.t. the plane  through the other four cells. \\ \hline 
  $2$ & \includegraphics[scale=0.25]{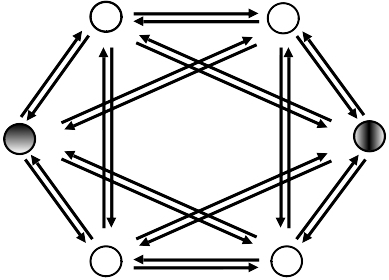}   & $(4) \; \;\{x_1=x_2=x_4=x_5\} $  \newline $(5) \; \;\{x_1=x_3=x_4=x_6\}$   \newline $(6) \; \;\{x_2=x_3=x_5=x_6\}$  & Rotation of {$\pi /2 $} w.r.t. an axis  through opposite cells. \\
  \hline  
  $3$ & \includegraphics[scale=0.25]{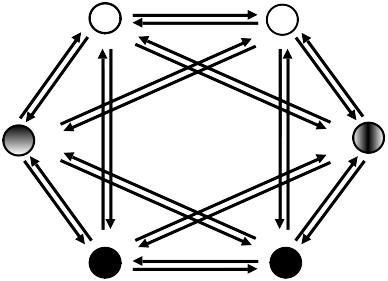}   & $(7) \; \;\{x_1=x_2, x_4=x_5\}$  \newline $(8) \; \;\{x_1=x_3,x_4=x_6\}$   \newline $(9) \; \;
  \{x_1=x_5,x_2=x_4\}$ \newline $(10) \; \;
  \{x_1=x_6,x_3=x_4\} $  \newline $(11) \; \;
  \{x_2=x_3,x_5=x_6\} $  \newline $(12) \; \;
  \{x_2=x_6,x_3=x_5\} $ & Reflection  to the plane through opposite cells and opposite edges. \\
 \hline
 $4$ & \includegraphics[scale=0.25]{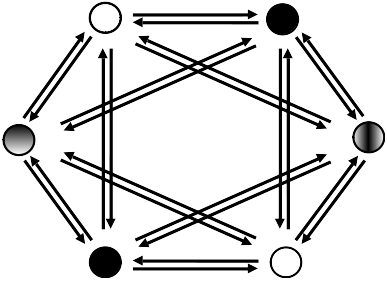}   &  $(13) \; \;\{x_1=x_4,x_2=x_5\}$   \newline $(14) \; \;
  \{x_1=x_4,x_3=x_6\}  $ \newline $(15) \; \;
  \{x_2=x_5,x_3=x_6\}$    &  Rotation of {$\pi$} of the octahedron w.r.t 
the line through opposite cells. \\
 \hline
 $5$ & \includegraphics[scale=0.25]{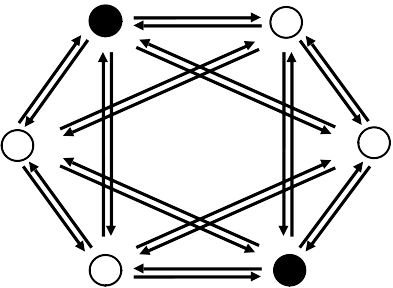}   & $(16) \; \;\{x_1=x_2=x_4=x_5,x_3=x_6\} $  \newline $(17) \; \; \{x_1=x_3=x_4=x_6,x_2=x_5\} $  \newline $(18) \; \; \{x_1=x_4,x_2=x_3=x_5=x_6\} $    & Rotation of {$\pi /2 $} w.r.t. an axis through opposite cells and a reflection w.r.t. the perpendicular plane. \\
 \hline
 $6$ & \includegraphics[scale=0.25]{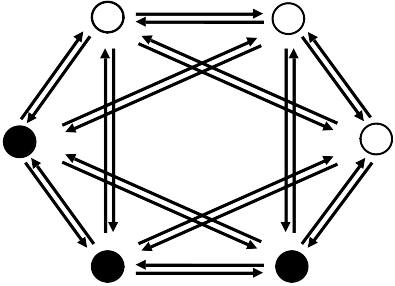}   & $(19) \; \;\{x_1=x_2=x_3,x_4=x_5=x_6\}$   \newline $(20) \; \;\{x_1=x_2=x_6,x_3=x_4=x_5\} $   \newline $(21) \; \;\{x_1=x_3=x_5,x_2=x_4=x_6\}  $ \newline $(22) \; \;\{x_1=x_5=x_6,x_2=x_3=x_4\} $  & Rotation of {$2\pi/3$} w.r.t. the line that cuts the opposite faces. \\
 \hline  
 $7$ & \includegraphics[scale=0.25]{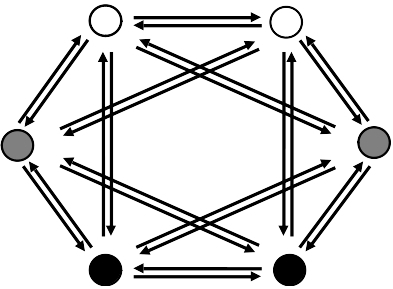}   & $(23) \; \;\{x_1=x_2,x_3=x_6,x_4=x_5\} $   \newline $(24) \; \;\{x_1=x_3,x_2=x_5,x_4=x_6\} $ \newline $(25) \; \;
  \{x_1=x_4,x_2=x_3,x_5=x_6\}  $ \newline $(26) \; \;
  \{x_1=x_4,x_2=x_6,x_3=x_5\} $   \newline $(27) \; \;
  \{x_1=x_5,x_2=x_4,x_3=x_6\} $ \newline $(28) \; \;
  \{x_1=x_6,x_2=x_5,x_3=x_4\} $     & Reflection w.r.t. two planes, one through two opposite cells and the other orthogonally through four cells. \\
 \hline

 $8$ & \includegraphics[scale=0.25]{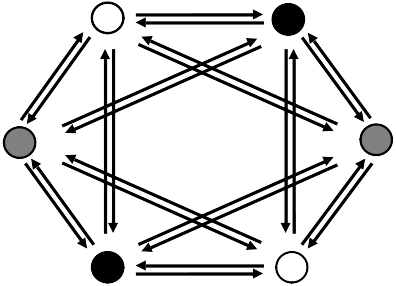}   & $(29) \; \; \{x_1=x_4,x_2=x_5,x_3=x_6\}$     & Rotation of {$\pi $} w.r.t. an axis through opposite cells and a reflection w.r.t. the perpendicular plane. \\
 \hline
\end{tabular}}} 
\caption{ Synchrony patterns of the network graph $G_6$ together with their symmetries. }
\end{table}

\begin{table}[H]
\centering 
\begin{tabular}{ |m{0.3cm}|m{4cm}|m{5cm}|m{1cm}|} 
\hline
 $\sharp$ & \quad  \quad \quad  \quad \quad $\CG$ & Synchrony patterns (numbering extracted from Table~1) &  $| \Sigma(\CG) | $ \\
\hline
$1$ & {\includegraphics[scale=0.25]{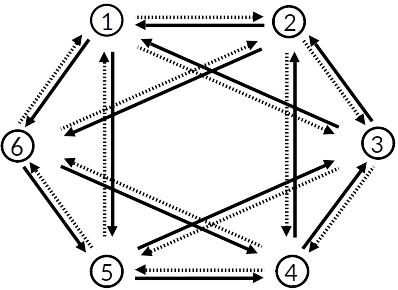}} & $16,17,18,21,29$ & $1$ \\
\hline 
$2$ & {\includegraphics[scale=0.25]{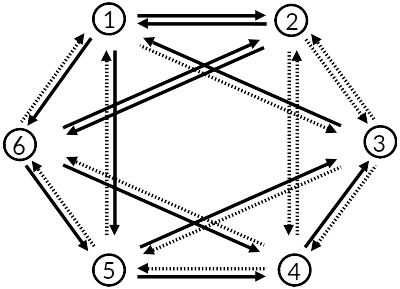}} & $2,16,17,18,21, 29$ & $6$ \\ \hline  
$3$ & {\includegraphics[scale=0.25]{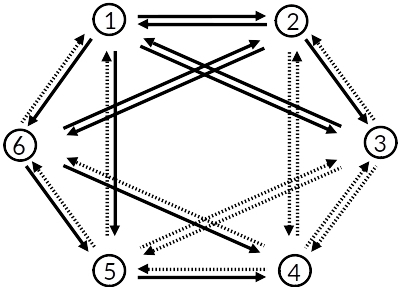}} & $2,3,6,12,15,16,17,18,21, \newline26,29$  & $6$ \\ \hline
$4$ & {\includegraphics[scale=0.25]{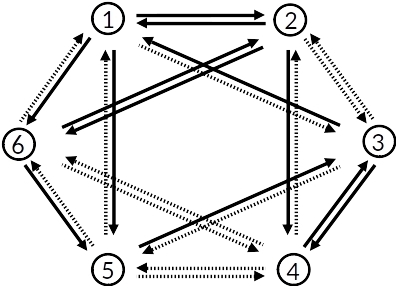}} & $1,2,13,16,17,18,21,  29$   & $6$ \\ \hline 
$5$ & {\includegraphics[scale=0.25]{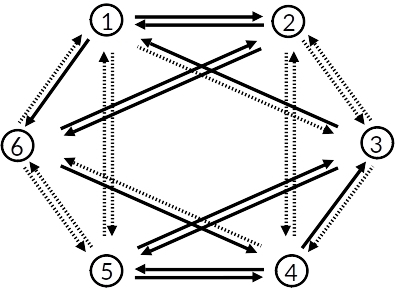}} & $9,12,16,17,18, 21, 26,27,  29$   & $3$ \\ \hline
$6$ & {\includegraphics[scale=0.25]{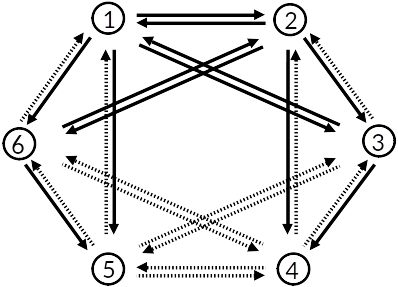}}  & $1,2,3,5,6,8, 12,13,14,15,16, \newline 17, 18,21,23,24,26,29$  \newline  & $3$ \\ \hline 
$7$ & {\includegraphics[scale=0.25]{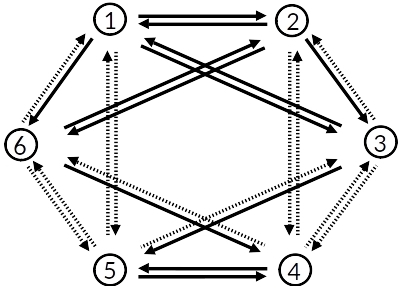}} & $3,9,16,17,18,21,27, 29$   \newline & $6$ \\ \hline 
$8$ & {\includegraphics[scale=0.25]{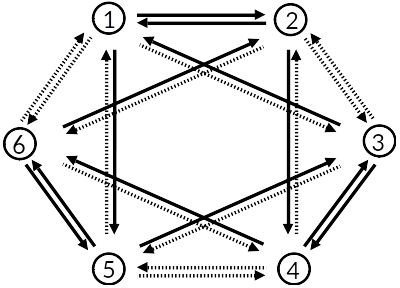}} & $1,2,3,13,14,15,16,17,18,21,\newline 
23,25,28,29 $  \newline  & $1$ \\ \hline 
\end{tabular}
\caption{Admissible network graphs for \eqref{eq:vf on R6} up to ODE-equivalence with  their corresponding synchrony patterns. The last column gives the number of admissible graphs nonisomorphic and ODE-equivalent  to the graph.}
\end{table}

\vspace*{1cm}

\bibliographystyle{alpha}

\end{document}